\documentclass[11pt]{article}
\usepackage{geometry}
\usepackage[utf8]{inputenc}
\usepackage{amssymb,amsmath,amsfonts, amsthm}
\usepackage{array}
\usepackage{mathrsfs}
\usepackage{graphicx,epsfig}
\usepackage{caption}
\usepackage{subcaption}
\usepackage{url}
\usepackage{lscape}
\usepackage{xspace}
\usepackage{algorithm}

\usepackage{algpseudocode}
\usepackage{hyperref}
\usepackage{mathtools}
\usepackage{multirow,multicol}
\usepackage{cleveref}
\usepackage{comment}
\usepackage{ulem}
\usepackage[usenames,dvipsnames]{color}
\usepackage{cancel}
\usepackage{mathdots}
\usepackage{diagbox}
\usepackage{extarrows}
\usepackage{arydshln}

\usepackage[title]{appendix}

\usepackage{blkarray}
\usepackage{makecell}
\usepackage{multicol}
\usepackage{amsopn}

\def\M{\mathcal{M}}

\def\a{\mathbf{a}}
\def\b{\mathbf{b}}
\def\c{\mathbf{c}}

\def\e{\mathbf{e}}
\def\f{\mathbf{f}}
\def\g{\mathbf{g}}

\def\k{\mathbf{k}}

\def\p{\mathbf{p}}
\def\q{\mathbf{q}}
\def\r{\mathbf{r}}
\def\s{\mathbf{s}}

\def\u{\mathbf{u}}
\def\v{\mathbf{v}}

\def\x{\mathbf{x}}
\def\y{\mathbf{y}}
\def\z{\mathbf{z}}
\def\U{\mathbf{U}}

\newtheorem{thm}{Theorem}[section]
\newtheorem{remark}[thm]{Remark}

\crefname{assumption}{Assumption}{Assumptions}

\crefname{lem}{Lemma}{Lemmas}
\crefname{thm}{Theorem}{Theorems}
\crefname{cor}{Corollary}{Corollaries}
\crefname{pr}{Proposition}{Propositions}
\crefname{remark}{Remark}{Remarks}
\crefname{algorithm}{Algorithm}{Algorithms}
\crefname{appendix}{}{Appendices}
\crefname{section}{Section}{Sections}
\crefname{figure}{Figure}{Figures}

\numberwithin{equation}{section}

\crefname{table}{Table}{Tables}

\title{Null-Space-Free 6D Spectral Embedding with Local Rayleigh Quotient Recovery for 3D Quasiperiodic Maxwell's Eigenproblems }

\author{
Teng-Chao Sun\footnote{School of Mathematics and Shing-Tung Yau Center, Southeast University, Nanjing 211189, People's Republic of China.}, \quad
Tiexiang Li\footnote{Corresponding author (txli@seu.edu.cn). School of Mathematics and Shing-Tung Yau Center, Southeast University, Nanjing 211189, People's Republic of China; Shanghai Institute for Mathematics and Interdisciplinary Sciences (SIMIS), Shanghai 200433, People's Republic of China.}, \quad
Wen-Wei Lin\footnote{Shanghai Institute for Mathematics and Interdisciplinary Sciences (SIMIS), Shanghai 200433, People's Republic of China; Department of Applied Mathematics, National Yang Ming Chiao Tung University, Hsinchu 300, Taiwan.} \quad
Xing-Long Lyu\footnote{School of Mathematical Sciences, Nanjing Normal University, Nanjing 210023, People's Republic of China}
}

\date{}

\begin{document}

\maketitle

\begin{abstract}
We develop a numerical framework for 3D quasiperiodic Maxwell eigenvalue problems based on a 6D periodic projection formulation. A projected Bloch-Fourier discretization yields a structured generalized eigenvalue problem with a large gradient kernel. We construct explicit orthonormal bases for the longitudinal and transverse subspaces to remove the gradient kernel exactly and reduce the original generalized eigenvalue problem to a null-space free standard eigenvalue problem. An explicit inverse representation of the reduced operator avoids nested linear solves and leads to an inverse Lanczos method with a Hermitian positive definite CG inner system whose condition number is bounded by the permittivity contrast. To recover the computed modes in physical space, the 6D Fourier eigenvectors are reconstructed on a 3D Yee grid by a separated multi-center Taylor expansion, which avoids the dense Fourier-to-grid phase matrix and remains practical when direct dense reconstruction becomes prohibitively expensive. The reconstructed fields are assessed using a separately discretized cropped Yee operator. Local weighted Rayleigh quotients provide spatially resolved spectral estimates, while their mass-weighted mean equals the cropped Yee quotient under a partition-of-unity condition. Numerical experiments demonstrate the accuracy and efficiency of the complete framework.
\end{abstract}

\noindent\textbf{Keywords:}
quasiperiodic Maxwell eigenvalue problem, higher-dimensional projection method, null-space free eigensolver, Yee-grid reconstruction, local weighted Rayleigh quotient

\section{Introduction}
\label{sec:introduction}

Quasiperiodic media possess long-range order without translational periodicity. Since the discovery of quasicrystals \cite{ShechtmanEtAl1984}, their geometric, spectral, and wave-propagation properties have attracted sustained interest; comprehensive reviews of photonic and phononic quasicrystals and their relation to periodic and disordered media can be found in \cite{SteurerSutterWidmer2007,VardenyNahataAgrawal2013,Edagawa2014}. Photonic quasicrystals exhibit complete or nearly isotropic band gaps \cite{ChanChanLiu1998,DyachenkoMiklyaevDmitrienko2007}, topologically protected transport \cite{BandresRechtsmanSegev2016}, and high-topological-charge lasing associated with noncrystallographic rotational symmetries \cite{ArjasEtAl2024}. These phenomena motivate reliable computation of quasiperiodic Maxwell spectra and modes.

A general quasiperiodic medium has no finite unit cell in physical space, and the standard Bloch formulation for periodic Maxwell systems \cite{JoannopoulosEtAl2008} therefore cannot be applied directly. Periodic approximants and large supercells provide one possible treatment, but the approximating domains may grow rapidly with the desired accuracy and may introduce severe band folding and mode mixing \cite{JiangLiZhang2025Approx,ZhangEtAl2022}. An alternative is to represent the quasiperiodic coefficient as the restriction of a periodic function defined in a higher-dimensional space. This viewpoint was used for the computation and visualization of photonic-quasicrystal spectra through a higher-dimensional Bloch formulation \cite{RodriguezEtAl2008}, and it also forms the basis of the projection method for general quasiperiodic systems \cite{JiangZhang2014}. Within this framework, subsequent work has developed numerical approximation and complexity analysis \cite{JiangLiZhang2024}, finite-point recovery of quasiperiodic functions \cite{JiangZhouZhang2024}, projection-based homogenization \cite{JiangEtAl2025Homogenization}, filtered projection methods for quasiperiodic eigenvalue problems \cite{JiangEtAl2025IWFPM}, and convergence analysis for quasiperiodic elliptic operators \cite{JiangTangZhaiZhou2026}. These developments provide a systematic numerical foundation for treating quasiperiodic problems directly in the higher-dimensional representation rather than through large periodic approximants.

For Maxwell eigenproblems, Gao, Xu, and Yang developed a divergence-free projection method for 3D quasiperiodic photonic crystals using a higher-dimensional periodic representation \cite{GaoXuYang2024}. Their method constructs divergence-free Fourier bases within the projected formulation and solves the resulting Maxwell eigenproblem in the divergence-free subspace. Related reduced projection and mode-selection strategies have also been developed for scalar quasiperiodic Schr\"odinger eigenproblems arising in photonic moir\'e lattices \cite{GaoXuYang2025Reduced}. 

These works demonstrate the effectiveness of projection-based formulations for quasiperiodic spectral problems. The present work instead focuses on the computational realization of the projected Maxwell eigenproblem and the interpretation of its eigenpairs in physical space. This leads to the following three computational issues.

\begin{itemize}
\item The first motivation is the efficient solution of the large scale eigenvalue problem (EVP) arising from the 6D projection formulation. The projected Bloch-Fourier discretization inherits the large gradient kernel of the Maxwell curl-curl operator, which complicates the computation of the smallest positive eigenvalues. Moreover, the 6D tensor-product Fourier discretization rapidly increases the dimension of the discrete eigenproblem, while inverse eigensolvers require repeated linear solves whose efficiency depends critically on the conditioning of the underlying systems. For periodic Maxwell problems, finite-difference and matrix analytic approaches have led to efficient null-space free eigensolvers based on explicit decompositions of the discrete curl-curl operator \cite{HuangEtAl2013,ChernEtAl2015,HuangEtAl2015,LyuEtAl2021FAME,LyuEtAl2022,TianEtAl2023,LyuLiLinLin2024}. These approaches provide useful guidance for the present high-dimensional quasiperiodic setting.

\item The second motivation is the recovery of physical electric fields from the 6D Fourier eigenvectors. A straightforward simultaneous evaluation on a 3D Yee grid forms a dense Fourier-to-grid phase matrix, with arithmetic cost and storage proportional to the product of the numbers of retained Fourier modes and physical sampling points. Blockwise direct evaluation reduces the auxiliary storage by avoiding the full phase matrix, but its arithmetic cost remains proportional to the product of the numbers of Fourier modes and physical sampling points.

\item The third motivation is the physical-space assessment of the computed 6D eigenpairs. A Fourier-truncated eigenpair obtained from the embedded problem does not by itself quantify how well the reconstructed field satisfies the 3D quasiperiodic Maxwell discretization. In our recent work \cite{SunLiLinLyu2026}, weighted Rayleigh quotients were introduced for reconstructed fields in 2D quasiperiodic Helmholtz problems, whereas the Maxwell setting requires an extension to coupled vector fields on staggered grids. A 3D spectral estimate and residual are therefore needed to assess the consistency between the 6D spectral computation and the reconstructed physical-space field.

\end{itemize}

Motivated by these considerations, we develop a unified computational framework for 3D quasiperiodic Maxwell EVPs based on a 6D Bloch-Fourier discretization, connecting positive-spectrum computation, physical-field reconstruction, and 3D consistency assessment. The main contributions are summarized as follows.

\begin{itemize}

\item The proposed modewise longitudinal-transverse decomposition removes the gradient kernel and yields a null-space free standard EVP. An explicit inverse representation eliminates nested solves, and the resulting inner linear system is Hermitian positive definite with a condition number bounded directly by the permittivity contrast. Large-scale experiments on eigenproblems of dimension exceeding $10^7$ show that the resulting inverse Lanczos solver achieves approximately a threefold speedup over the original shift-and-invert realization.

\item We develop a memory-efficient physical-field reconstruction based on a blockwise low-rank multi-center Taylor representation that exploits the local phase structure of the projected Fourier expansion. The resulting formulation avoids the full dense Fourier-to-grid phase matrix and enables efficient recovery of Fourier eigenvectors on large 3D staggered Yee grids.

\item A 3D local weighted Rayleigh quotient (LWRQ) framework is developed for the reconstructed staggered Maxwell fields, providing a physical-space spectral estimate together with a consistency assessment of the reconstructed modes against the 3D Yee discretization.
\end{itemize}

The remainder of this paper is organized as follows. Section~\ref{sec:Maxwell_problem} presents the 6D projection formulation and Bloch-Fourier discretization. Sections~\ref{sec:null_space_free} and~\ref{sec:inverse_lanczos} develop the null-space free reduction and inverse Lanczos solver, respectively. Section~\ref{sec:LWRQ_error_analysis} presents the physical-space reconstruction and LWRQ analysis. Section~\ref{sec:numerical_experiments} reports the numerical results, and Section~\ref{sec:conclusion} concludes the paper.

\noindent\textit{Notation.}
Bold lowercase letters denote vectors. The symbols $(\cdot)^{\mathsf T}$ and $(\cdot)^*$ denote the transpose and conjugate transpose, respectively. The notation $\|\cdot\|_2$ denotes the Euclidean vector norm or the induced matrix norm, as appropriate. We use $\imath=\sqrt{-1}$ for the imaginary unit.

\section{6D Projection Formulation and Bloch-Fourier Discretization of the 3D Quasiperiodic Maxwell's Problem}
\label{sec:Maxwell_problem}
We consider the 3D quasiperiodic Maxwell's eigenvalue problem
\begin{equation}
\label{eq:3d_maxwell_qp}
\begin{cases}
\nabla_{\r} \times \left( \mu^{-1}(\r)\, \nabla_{\r}\times \u(\r) \right) =
\omega^2 \varepsilon(\r)\u(\r),
\qquad \r = (r_1, r_2, r_3)^{\mathsf{T}}\in\mathbb R^3, \\[2mm]
\nabla_{\r}\cdot \left( \varepsilon(\r)\u(\r) \right) =0 .
\end{cases}
\end{equation}
Here $\u=(u_1,u_2,u_3)^{\mathsf T}$ denotes the electric field. We consider nonmagnetic media with $\mu(\r)=I_3$ and a uniformly symmetric positive definite quasiperiodic permittivity tensor $\varepsilon(\r)\in\mathbb R^{3\times3}$.We write $\lambda=\omega^2$ for the spectral parameter and $\widetilde\lambda$ for an eigenvalue of the fixed Fourier-pseudospectral discretization studied below.

We use a higher-dimensional projection framework \cite{RodriguezEtAl2008,JiangZhang2014,JiangLiZhang2024} to lift the 3D quasiperiodic Maxwell problem to a 6D periodic problem, where the lifted field is represented in Bloch form. We retain the full three-component tensor-product Fourier space and exploit the modewise longitudinal-transverse structure of the resulting curl-curl matrix. This formulation leads to the discrete compatibility and mass-weighted divergence relations used in the null-space free reduction of Section~\ref{sec:null_space_free}.

Let $\x = (x_1,\ldots,x_6)^{\mathsf T}\in\mathbb R^6$. We introduce the projection matrix
\begin{equation}
\label{eq:P_definition}
P=
[\p_1^{\mathsf T},\ldots,\p_6^{\mathsf T}]^{\mathsf T}
=
[\a_1,\a_2,\a_3]
\in\mathbb R^{6\times3},
\end{equation}
where $\p_\ell^{\mathsf T}$, $\ell=1,\ldots,6$, are the row vectors of $P$, and $\a_1,\a_2,\a_3\in\mathbb R^6$ are its column vectors. We assume that $\operatorname{rank}(P)=3$.

The physical and higher-dimensional coordinates are related by
\begin{equation}
\label{eq:projection_relation}
\x=P\r.
\end{equation}
The quasiperiodic permittivity is represented as the restriction of a periodic tensor field $\mathcal E(\x)\in\mathbb R^{3\times3}$ on the 6D torus $\mathcal T^6$ with period vector $\mathbf T=(T_1,\ldots,T_6)$:
\begin{equation}
\label{eq:epsilon_lifting}
\varepsilon(\r)=\mathcal E(P\r),
\qquad
\mathcal E(\x+T_\ell\e_\ell)=\mathcal E(\x),
\qquad
\ell=1,\ldots,6.
\end{equation}
We assume that $\mathcal E(\x)$ is uniformly symmetric positive definite and that the physical frequency generators
$\{(2\pi/T_\ell)\p_\ell\}_{\ell=1}^6$
are rationally independent, i.e.,
$$
\sum_{\ell=1}^6
m_\ell\frac{2\pi}{T_\ell}\p_\ell=0,
\qquad
m_\ell\in\mathbb Z,
\quad\Longrightarrow\quad
m_1=\cdots=m_6=0.
$$

Let $\U:\mathbb R^6\to\mathbb C^3$ be a lifted vector field, and define its restriction to the physical quasiperiodic cut by $ \u(\r)=\U(P\r)$. By the chain rule, differentiation along the physical cut gives
\begin{equation}
\label{eq:projected_gradient_operator}
\nabla_{\r} =
P^{\mathsf T}\nabla_{\x} =
\begin{pmatrix}
\a_1^{\mathsf T}\nabla_{\x}\\
\a_2^{\mathsf T}\nabla_{\x}\\
\a_3^{\mathsf T}\nabla_{\x}
\end{pmatrix}
=: \nabla_P.
\end{equation}

The corresponding projected curl-curl operator satisfies
\begin{equation}
\label{eq:projected_curl_curl}
\nabla_{\r}\times\nabla_{\r}\times\u(\r)
=
\left[
\nabla_P\times
\left(
\nabla_P\times\U
\right)
\right](P\r).
\end{equation}
Therefore, the corresponding 6D periodic Maxwell problem is
\begin{equation}
\label{eq:projected_maxwell_evp}
\nabla_P\times
\left(
\nabla_P\times\U(\x)
\right)
=
\lambda\mathcal E(\x)\U(\x),
\qquad
\x\in\mathbb R^6,
\end{equation}
with the Bloch quasiperiodicity specified below.

We impose a Bloch representation on the lifted problem and discretize its periodic factor by a tensor-product Fourier-pseudospectral method on $\mathcal T^6$. General Fourier spectral methods are described in \cite{CanutoEtAl2006}, while their use in higher-dimensional quasiperiodic projection methods and the associated convergence analysis are discussed in \cite{JiangLiZhang2024,JiangTangZhaiZhou2026}. For sufficiently smooth periodic lifts, Fourier expansions provide high-order function approximation, with exponential rates under analyticity assumptions. Here we study the fixed discrete problem and its physical-space consistency, without asserting convergence of ordered eigenvalues as the Fourier cutoff increases. Following the standard and higher-dimensional Bloch formulations \cite{JoannopoulosEtAl2008,RodriguezEtAl2008,GaoXuYang2024}, we write the lifted field as
\begin{equation}
\label{eq:lifted_bloch_form}
\U(\x) = e^{\imath\k\cdot\x}\U_p(\x),
\qquad
\U_p(\x+T_\ell\e_\ell)=\U_p(\x),
\qquad
\ell=1,\ldots,6,
\end{equation}
where $\U_p:\mathcal T^6\to\mathbb C^3$ denotes the periodic factor of the lifted Bloch field, $\mathbb B^6=\prod_{\ell=1}^{6}[-\pi/T_\ell,\pi/T_\ell)$ is the first Brillouin zone, $\k\in\mathbb B^6$ is the lifted Bloch vector, and $\q=P^{\mathsf T}\k\in\mathbb R^3$ is the corresponding physical Bloch wave vector. Accordingly, the projected Bloch differential operator can be written as
\begin{equation}
\label{eq:projected_bloch_gradient}
\nabla_{P,\k} :=
P^{\mathsf{T}}(\nabla_{\x}+\imath\k) =
\nabla_P+\imath\q.
\end{equation}
Hence $\U$ is $\k$-quasiperiodic with respect to the 6D lattice periods, while its restriction to the physical cut carries the physical Bloch wave vector $\q$.

Expanding the periodic factor $\U_p$ in the tensor-product Fourier basis gives
\begin{equation}
\label{eq:bloch_fourier_expansion_U}
\U(\x) =
\sum_{\boldsymbol{\xi}\in\mathcal J_v}
\U_{\boldsymbol{\xi}} e^{\imath(\k+\boldsymbol{\xi})\cdot\x},
\end{equation}
where
\begin{equation}
\label{eq:reciprocal_index_set_JN}
\mathcal J_v =
\left\{
\left( \frac{2\pi}{T_1}j_1,\ldots, \frac{2\pi}{T_6}j_6 \right)^{\mathsf T}
: j_\ell=-N,\ldots,N-1, \; \ell = 1, \ldots, 6
\right\},
\qquad
N_F=(2N)^6.
\end{equation}

For each retained Fourier mode, define
\begin{equation}
\label{eq:effective_wave_vector_and_curl_symbol}
\q_{\boldsymbol{\xi}} =
P^{\mathsf T}(\k+\boldsymbol{\xi}) =
(q_{\boldsymbol{\xi},1},q_{\boldsymbol{\xi},2},q_{\boldsymbol{\xi},3})^{\mathsf T},
\qquad
Q_{\boldsymbol{\xi}} =
\begin{pmatrix}
0 & -q_{\boldsymbol{\xi},3} & q_{\boldsymbol{\xi},2}\\
q_{\boldsymbol{\xi},3} & 0 & -q_{\boldsymbol{\xi},1}\\
-q_{\boldsymbol{\xi},2} & q_{\boldsymbol{\xi},1} & 0
\end{pmatrix}.
\end{equation}
Hence the projected curl-curl operator acts modewise through $Q_{\boldsymbol{\xi}}^{\mathsf T}Q_{\boldsymbol{\xi}}$.

We order the Fourier coefficients by field component as $\U=(\U_1^{\mathsf T},\U_2^{\mathsf T},\U_3^{\mathsf T})^{\mathsf T}$, where $\U_j\in\mathbb C^{N_F}$ for $j=1,2,3$. Let $Q_j=\operatorname{diag}(q_{\boldsymbol{\xi},j})_{\boldsymbol{\xi}\in\mathcal J_v}$ for $j=1,2,3$. The  resulting stiffness matrix is
\begin{equation}
\label{eq:K_Q_block_form}
K
=
\begin{pmatrix}
Q_2^2+Q_3^2 & -Q_1Q_2 & -Q_1Q_3\\
-Q_1Q_2 & Q_1^2+Q_3^2 & -Q_2Q_3\\
-Q_1Q_3 & -Q_2Q_3 & Q_1^2+Q_2^2
\end{pmatrix}.
\end{equation}

The permittivity term is discretized pseudospectrally on the same $(2N)^6$ tensor grid. Let $\mathscr F_N$ denote the corresponding 6D discrete Fourier transform, and let
$\x_{\boldsymbol m}=(m_1T_1/(2N),\ldots,m_6T_6/(2N))^{\mathsf T}$,
with $\boldsymbol m=(m_1,\ldots,m_6)^{\mathsf T}\in\{0,\ldots,2N-1\}^6$, denote the tensor-grid points. For $i,j=1,2,3$, let
$\mathcal E_N^{(i,j)}
=
\left(
\mathcal E^{(i,j)}(\x_{\boldsymbol m})
\right)_{\boldsymbol m}$
denote the sampled values of the $(i,j)$ entry of the permittivity tensor, and define
\begin{equation}
\label{eq:pseudospectral_mass_blocks}
D_{\mathcal E}^{(i,j)}
=
\operatorname{diag}\left(\mathcal E_N^{(i,j)}\right),
\qquad
M^{(i,j)}
=
\mathscr F_N
D_{\mathcal E}^{(i,j)}
\mathscr F_N^{-1}.
\end{equation}
Equivalently, the action of each block is
\begin{equation}
\label{eq:pseudospectral_mass_action}
M^{(i,j)}\v
=
\mathscr F_N
\left[
\mathcal E_N^{(i,j)}
\odot
\mathscr F_N^{-1}\v
\right].
\end{equation}
Under the component-wise ordering, the discrete mass matrix is therefore $M = \left[ M^{(i,j)} \right]_{i,j=1}^{3}$, and the Fourier-pseudospectral discretization is
\begin{equation}
\label{eq:discrete_gevp}
K\U = \widetilde{\lambda}M\U.
\end{equation}

The action of $M$ is evaluated by component-wise inverse FFTs, pointwise multiplication by the sampled $3\times3$ permittivity tensors, and component-wise FFTs. Each application of $M$ costs $\mathcal O(N_F\log N_F)$ operations and requires $\mathcal O(N_F)$ auxiliary storage. Since $\mathcal E(\x_{\boldsymbol m})$ is uniformly positive definite and the discrete Fourier transform is unitary up to normalization, $M$ is Hermitian positive definite, while $K$ is Hermitian positive semidefinite.

Assume that $\q_{\boldsymbol{\xi}}\neq0$ for all $\boldsymbol{\xi}\in\mathcal J_v$. For each retained mode, $Q_{\boldsymbol{\xi}}^{\mathsf T}Q_{\boldsymbol{\xi}}$ has eigenvalues $0$, $\|\q_{\boldsymbol{\xi}}\|_2^2$, and $\|\q_{\boldsymbol{\xi}}\|_2^2$. Hence, each retained mode contributes one longitudinal zero mode and two transverse positive modes, so that
$$
\dim\ker(K)=N_F,
\qquad
\operatorname{rank}(K)=2N_F.
$$

The longitudinal null space is spanned by the columns of
\begin{equation}
\label{eq:null_space_U0}
G =
\begin{pmatrix}
Q_1\\ Q_2\\ Q_3
\end{pmatrix}
=
\begin{pmatrix}
\operatorname{diag}\left(q_{\boldsymbol{\xi},1}\right)_{\boldsymbol{\xi}\in\mathcal J_v}\\
\operatorname{diag}\left(q_{\boldsymbol{\xi},2}\right)_{\boldsymbol{\xi}\in\mathcal J_v}\\
\operatorname{diag}\left(q_{\boldsymbol{\xi},3}\right)_{\boldsymbol{\xi}\in\mathcal J_v}
\end{pmatrix}
\in\mathbb C^{3N_F\times N_F}.
\end{equation}

\paragraph{Discrete compatibility and divergence constraint}
With $G$ in \eqref{eq:null_space_U0} serving as the discrete gradient operator, define the global discrete curl and divergence operators by
\begin{equation}
\label{eq:discrete_operator}
Q =
\begin{pmatrix}
0 & -Q_3 & Q_2 \\
Q_3 & 0 & -Q_1 \\
-Q_2 & Q_1 & 0
\end{pmatrix},
\qquad
D=G^{*}.
\end{equation}
Here $Q$ acts mode by mode through the Fourier curl symbols $Q_{\boldsymbol{\xi}}$, while $G$ collects the projected wave-vector components $\q_{\boldsymbol{\xi}}=P^{\mathsf{T}}(\k+\boldsymbol{\xi})$. With this notation, the stiffness matrix satisfies
\begin{equation} \label{eq:double_curl_operator}
    K=Q^{*}Q.
\end{equation}

\begin{thm}
\label{thm:discrete_divergence_free}
The discrete operators satisfy
\begin{equation}
\label{eq:discrete_operator_identity_1}
QG=0,
\qquad
DQ^{*}=0.
\end{equation}
Moreover, if $\U$ is an eigenvector of \eqref{eq:discrete_gevp} associated with a positive eigenvalue $\widetilde{\lambda}>0$, then
\begin{equation}
\label{eq:discrete_operator_identity_2}
DM\U
=
G^{*}M\U
=
0.
\end{equation}
\end{thm}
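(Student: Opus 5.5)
The plan is to verify the two operator identities by direct block computation, exploiting that $Q_1,Q_2,Q_3$ are real diagonal matrices and hence commute with each other and are Hermitian. For $QG$, multiply the block matrix $Q$ in \eqref{eq:discrete_operator} by the block column $G=(Q_1;Q_2;Q_3)$ from \eqref{eq:null_space_U0}. The three block rows give
\[
-Q_3Q_2+Q_2Q_3,\qquad Q_3Q_1-Q_1Q_3,\qquad -Q_2Q_1+Q_1Q_2 .
\]
Each vanishes because diagonal matrices commute. Equivalently, this is the modewise identity $Q_{\boldsymbol{\xi}}\q_{\boldsymbol{\xi}}=\q_{\boldsymbol{\xi}}\times\q_{\boldsymbol{\xi}}=0$.

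For the second identity, take the conjugate transpose: $DQ^{*}=G^{*}Q^{*}=(QG)^{*}=0$. No separate computation is needed.

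For the divergence constraint, let $K\U=\widetilde\lambda M\U$ with $\widetilde\lambda>0$. Apply $G^{*}$ and use \eqref{eq:double_curl_operator}:
\[
\widetilde\lambda\, G^{*}M\U=G^{*}K\U=G^{*}Q^{*}Q\U=(DQ^{*})Q\U=0 .
\]
Dividing by $\widetilde\lambda\neq0$ gives $DM\U=G^{*}M\U=0$.

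The only point needing care is confirming that $K=Q^{*}Q$ is consistent with \eqref{eq:K_Q_block_form}, and that $Q^{*}$ is the conjugate transpose of $Q$ rather than something else. Since the $Q_j$ are real, $Q^{*}=Q^{\mathsf T}=-Q$ blockwise, and a quick block multiplication recovers \eqref{eq:K_Q_block_form}. The paper already states \eqref{eq:double_curl_operator}, so I would invoke it directly. I expect no genuine obstacle: the whole argument rests on the commutativity of the diagonal blocks and the positivity $\widetilde\lambda>0$, which is exactly what excludes the $N_F$-dimensional kernel $\operatorname{range}(G)$, where the constraint can fail.
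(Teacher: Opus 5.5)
Your proposal is correct and follows essentially the same route as the paper: $QG=0$ from the modewise identity $Q_{\boldsymbol{\xi}}\q_{\boldsymbol{\xi}}=0$ (equivalently, commutativity of the real diagonal blocks $Q_j$), $DQ^{*}=0$ by taking the conjugate transpose, and $G^{*}M\U=0$ by left-multiplying $K\U=\widetilde{\lambda}M\U$ by $G^{*}$ and dividing by $\widetilde{\lambda}\neq0$. The only cosmetic difference is that you obtain $G^{*}K=0$ from $K=Q^{*}Q$ and $DQ^{*}=0$, whereas the paper uses $KG=0$ together with the Hermiticity of $K$.
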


\begin{proof}
For each $\boldsymbol{\xi}\in\mathcal J_v$, $Q_{\boldsymbol{\xi}}\q_{\boldsymbol{\xi}}=0$. After assembling all modes in the component-wise ordering, this gives $QG=0$. Taking the Hermitian transpose yields $G^{*}Q^{*}=0$, and hence $DQ^{*}=0$ because $D=G^{*}$.

Since $KG=0$ and $K$ is Hermitian, $G^{*}K=0$. For a positive eigenpair $K\U=\widetilde{\lambda}M\U$ with $\widetilde{\lambda}>0$, left multiplication by $G^{*}$ gives
$$
0=G^{*}K\U = \widetilde{\lambda}G^{*}M\U.
$$
Thus $G^{*}M\U=DM\U=0$, which is the discrete counterpart of the physical divergence constraint in \eqref{eq:3d_maxwell_qp}.
\end{proof}

Thus, the fixed Bloch-Fourier discretization retains the gradient kernel of the Maxwell curl-curl operator, while every positive eigenvector satisfies the corresponding mass-weighted discrete divergence constraint. The next section exploits this longitudinal-transverse structure to remove the gradient kernel and derive a null-space free formulation for the positive spectrum.

\section{null-space Free Reduction to a Standard Eigenvalue Problem}
\label{sec:null_space_free}

The generalized eigenvalue problem (GEVP) $K \U=\widetilde\lambda M \U$ inherits the gradient kernel of the discrete curl-curl operator. We follow the range-space strategy underlying null-space free Maxwell solvers \cite{HuangEtAl2013,ChernEtAl2015,LyuEtAl2021FAME} and give its explicit realization for the projected Fourier symbols used here. The construction identifies orthonormal longitudinal and transverse bases and yields a standard EVP preserving the complete positive spectrum of the fixed GEVP.

Define
\begin{equation}
\label{eq:Qs_Qp_def}
Q_q = Q_1^{*}Q_1+Q_2^{*}Q_2+Q_3^{*}Q_3,
\qquad
Q_s=Q_1+Q_2+Q_3,
\qquad
Q_p=Q_sQ_s^{*}.
\end{equation}

\begin{remark} 
\label{remark:exceptional_effective_wave_vectors} 
The formulas below use the reference direction $\mathbf t=(1,1,1)^{\mathsf T}$, assumed nonparallel to every retained effective wave vector. A different direction gives an analogous construction with corresponding changes to the reference block and normalization factors. We also assume $\q_{\boldsymbol\xi}\neq0$ for all retained modes. A zero effective wave vector, as may occur at the $\Gamma$ point, lies outside this setting and its three-component mode must instead be assigned to the null space. 
\end{remark}

Under these assumptions, the normalization factors below are strictly positive, so their inverse square roots are well defined.

We normalize the longitudinal basis in \eqref{eq:null_space_U0} to obtain the orthonormal null-space basis
\begin{equation}
\label{eq:null_space_orthonormal_basis}
\U_0
=
GQ_q^{-\frac12}
=
\begin{pmatrix}
Q_1\\
Q_2\\
Q_3
\end{pmatrix}
Q_q^{-\frac12}.
\end{equation}

We construct an orthonormal basis $\U_r=[\U_1,\U_2]\in\mathbb C^{3N_F\times2N_F}$ for $\operatorname{range}(K)$. Using the reference block $\mathbf T_1=[I,I,I]^{\mathsf T}\in\mathbb R^{3N_F\times N_F}$, define
\begin{subequations}
\label{total:transverse_basis_construction}
\begin{align}
\widehat{\U}_1
&=
(I-\U_0\U_0^{*})\mathbf T_1Q_q
=
\begin{pmatrix}
Q_q-Q_1Q_s^{*}\\
Q_q-Q_2Q_s^{*}\\
Q_q-Q_3Q_s^{*}
\end{pmatrix},
\label{eq:U1_projection}
\\
\U_1
&=
\widehat{\U}_1(3Q_q^2-Q_qQ_p)^{-\frac12},
\label{eq:U1_basis}
\\
\widehat{\U}_2
&=
Q^{*}\mathbf T_1
=
\begin{pmatrix}
Q_3^{*}-Q_2^{*}\\
Q_1^{*}-Q_3^{*}\\
Q_2^{*}-Q_1^{*}
\end{pmatrix},
\label{eq:U2_hat_def}
\\
\U_2
&=
\widehat{\U}_2(3Q_q-Q_p)^{-\frac12}.
\label{eq:U2_basis}
\end{align}
\end{subequations}
Here $Q$ is defined in \eqref{eq:discrete_operator}; the normalization factors in \eqref{eq:U1_basis} and \eqref{eq:U2_basis} make both transverse blocks orthonormal.

\begin{thm}
\label{thm:null_space_free_basis}
With the reference direction chosen as in Remark~\ref{remark:exceptional_effective_wave_vectors}, let $\U_0$, $\U_1$, and $\U_2$ be defined by \eqref{eq:null_space_orthonormal_basis}, \eqref{eq:U1_basis}, and \eqref{eq:U2_basis}, respectively, and set $\U_r=[\U_1,\U_2]$. Then $[\U_0,\U_1,\U_2]$ is unitary. Moreover, $ \operatorname{range}(\U_r)=\operatorname{range}(K)$,  and
\begin{equation}
\label{eq:K_range_decomp}
K=\U_rQ_r\U_r^*,
\qquad
Q_r=\operatorname{diag}(Q_q,Q_q)>0.
\end{equation}
\end{thm}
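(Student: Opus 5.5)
The plan is to reduce every claim to a family of independent $3\times3$ statements, one for each retained Fourier mode $\boldsymbol\xi\in\mathcal J_v$. All blocks $Q_1,Q_2,Q_3,Q_q,Q_s,Q_p$ are diagonal, indexed by $\boldsymbol\xi$. Hence there is a permutation $\Pi$ from the component-wise ordering to the mode-wise ordering that simultaneously block-diagonalizes $K$, $\U_0$, $\widehat\U_1$, $\widehat\U_2$ and the normalization factors. Unitarity of $[\U_0,\U_1,\U_2]$, the range identity, and the factorization $K=\U_rQ_r\U_r^*$ are all invariant under this permutation. It therefore suffices to verify them for the $3\times3$ blocks attached to a single mode. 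Since $\q_{\boldsymbol\xi}=P^{\mathsf T}(\k+\boldsymbol\xi)$ is real, each $Q_j$ is real and $Q_j^*=Q_j$. This simplifies all the conjugations below.

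Fix $\boldsymbol\xi$, write $\q=\q_{\boldsymbol\xi}$, $\mathbf t=(1,1,1)^{\mathsf T}$ and $s=\mathbf t^{\mathsf T}\q$. The diagonal entries of $Q_q$, $Q_s$, $Q_p$ at this mode are then $\|\q\|_2^2$, $s$ and $s^2$. The three columns become:
\begin{itemize}
\item the column of $\U_0$ is $\u_0=\q/\|\q\|_2$;
\item the column of $\widehat\U_1$ is $\|\q\|_2^2\mathbf t-s\,\q$, which is $\|\q\|_2^2$ times the projection of $\mathbf t$ onto $\q^\perp$;
\item the column of $\widehat\U_2$ is $Q_{\boldsymbol\xi}^{\mathsf T}\mathbf t=\mathbf t\times\q$, using $Q_{\boldsymbol\xi}\v=\q\times\v$ and $Q_{\boldsymbol\xi}^{\mathsf T}=-Q_{\boldsymbol\xi}$.
\end{itemize}

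The next step computes the norms. We have $\|\mathbf t\times\q\|_2^2=3\|\q\|_2^2-s^2$, which is the entry of $3Q_q-Q_p$. We also have
$$\big\|\|\q\|_2^2\mathbf t-s\q\big\|_2^2=3\|\q\|_2^4-\|\q\|_2^2s^2,$$
which is the entry of $3Q_q^2-Q_qQ_p$. Both quantities are strictly positive exactly because $\q\neq0$ and $\q$ is not parallel to $\mathbf t$ (Remark~\ref{remark:exceptional_effective_wave_vectors}), via the equality case of Cauchy--Schwarz. So the normalized columns $\u_1$ and $\u_2$ are unit vectors.

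Orthogonality follows from three observations:
\begin{itemize}
\item $\u_1\perp\q$ by construction, since $\widehat\U_1=(I-\U_0\U_0^*)\mathbf T_1Q_q$;
\item $\u_2\perp\q$ and $\u_2\perp\mathbf t$, because $\u_2$ is a multiple of the cross product $\mathbf t\times\q$;
\item $\u_2\perp\u_1$, because $\u_1$ lies in $\operatorname{span}\{\mathbf t,\q\}$.
\end{itemize}
Thus $\{\u_0,\u_1,\u_2\}$ is an orthonormal basis of $\mathbb R^3$ for each mode. Reassembling over all modes gives that $[\U_0,\U_1,\U_2]$ is unitary.

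For the factorization, the per-mode identity is $Q_{\boldsymbol\xi}^{\mathsf T}Q_{\boldsymbol\xi}=\|\q\|_2^2I_3-\q\q^{\mathsf T}$, which can be read off directly from the blocks of \eqref{eq:K_Q_block_form}. Completeness of the orthonormal basis gives $I_3-\u_0\u_0^{\mathsf T}=\u_1\u_1^{\mathsf T}+\u_2\u_2^{\mathsf T}$. Combining the two,
$$Q_{\boldsymbol\xi}^{\mathsf T}Q_{\boldsymbol\xi}=\|\q\|_2^2(\u_1\u_1^{\mathsf T}+\u_2\u_2^{\mathsf T}).$$
Assembling over the modes yields $K=\U_r\operatorname{diag}(Q_q,Q_q)\U_r^*$. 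Here $Q_r>0$ because every $\q_{\boldsymbol\xi}\neq0$. Since $\U_r$ has orthonormal columns and $Q_r$ is invertible, $\operatorname{range}(K)=\operatorname{range}(\U_r)$.

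The only step requiring genuine care is the bookkeeping: checking that the component-wise block formulas \eqref{eq:U1_projection}--\eqref{eq:U2_basis} really are the assembled versions of the per-mode vectors. In particular one must check that $(I-\U_0\U_0^*)\mathbf T_1Q_q$ expands to the stated blocks $Q_q-Q_jQ_s^*$, and that the diagonal normalization factors commute into the correct positions. I would verify this first by direct block multiplication, using that all the blocks are diagonal and therefore commute.
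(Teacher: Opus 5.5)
Your proof is correct and is essentially the paper's argument. The paper likewise checks the Gram identities $\widehat{\U}_1^*\widehat{\U}_1=3Q_q^2-Q_qQ_p$ and $\widehat{\U}_2^*\widehat{\U}_2=3Q_q-Q_p$ and the mutual orthogonality of the blocks, then reads off the factorization from the block form of $K$ (by verifying $K\U_1=\U_1Q_q$ and $K\U_2=\U_2Q_q$ rather than your identity $Q_{\boldsymbol\xi}^{\mathsf T}Q_{\boldsymbol\xi}=\|\q_{\boldsymbol\xi}\|_2^2(I_3-\u_0\u_0^{\mathsf T})$ plus completeness). Your mode-by-mode version additionally makes explicit the positivity of the normalization factors and the range identity, which the paper leaves implicit.
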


\begin{proof}
By construction, $K\U_0=0$ and $\U_0^*\U_0=I$, so $\U_0$ spans $\ker(K)$. Moreover, direct calculation gives
$$
\widehat{\U}_1^*\widehat{\U}_1
=
3Q_q^2-Q_qQ_p,
\qquad
\widehat{\U}_2^*\widehat{\U}_2
=
3Q_q-Q_p,
$$
which implies that $\U_1$ and $\U_2$ are orthonormal.
The constructions of $\U_1$ and $\U_2$ are orthogonal to $\U_0$ and to
each other; hence $[\U_0,\U_1,\U_2]$ is unitary.
Finally, using the block form of $K$,
$$
K\U_1=\U_1Q_q,\qquad K\U_2=\U_2Q_q,
$$
which yields
$K=\U_rQ_r\U_r^*$ with
$Q_r=\operatorname{diag}(Q_q,Q_q)$.
\end{proof}

For every positive eigenvector of \eqref{eq:discrete_gevp}, Theorem~\ref{thm:discrete_divergence_free} gives $G^{*}M\U=0$. Since $\operatorname{range}(\U_r)=\ker(G^{*})$, it follows that $M\U\in\operatorname{range}(\U_r)$. Hence every positive eigenvector can be written as
\begin{equation}
\label{eq:null_space_free_transform}
\U = M^{-1}\U_rQ_r^{\frac12}\v .
\end{equation}

Substituting \eqref{eq:null_space_free_transform} into the GEVP and premultiplying by $Q_r^{-1/2}\U_r^*$ gives
\begin{equation}
\label{eq:null_space_free_EVP}
K_r\v = \widetilde{\lambda}\v,
\qquad
K_r = Q_r^{\frac12}\U_r^{*}M^{-1}\U_rQ_r^{\frac12}.
\end{equation}

Thus, \eqref{eq:null_space_free_EVP} gives an exact null-space free reduction that removes the gradient kernel while preserving the complete positive spectrum of the fixed Fourier-pseudospectral discretization. The next section develops an inverse Lanczos method for its smallest eigenvalues.

\section{Inverse Lanczos Method with Orthogonalized Inner Solves}
\label{sec:inverse_lanczos}

Since $M$ and $Q_r$ are Hermitian positive definite and $\U_r$ has orthonormal columns, the reduced matrix $K_r$ in \eqref{eq:null_space_free_EVP} is Hermitian positive definite. We compute the smallest eigenvalues of $K_r$ by applying Lanczos to $K_r^{-1}$, whose largest eigenvalues are the reciprocals of the smallest eigenvalues of $K_r$. Although inverse Lanczos combined with null-space free Maxwell decompositions has been effective for periodic problems \cite{HuangEtAl2013,LyuEtAl2021FAME}, a naive application of $K_r^{-1}$ here would lead to nested linear solves, because each application of $K_r$ already involves $M^{-1}$. We therefore derive an explicit inverse representation whose dominant iterative solve is Hermitian positive definite.

Set $V=\U_rQ_r^{1/2}$. If necessary, apply a modewise row permutation to $V$ together with the corresponding symmetric permutation of $M$, and partition
\begin{subequations}
\label{eq:V_M_block_def}
\begin{equation}
\label{eq:V_block_def}
V =
\begin{pmatrix}
V_1\\
V_2
\end{pmatrix}
=
\U_rQ_r^{\frac12}, \; \text{with} \;
V_1\in\mathbb C^{2N_F\times 2N_F},
\qquad
V_2\in\mathbb C^{N_F\times2N_F},
\end{equation}
\begin{equation}
\label{eq:M_block_def_lanczos}
M
=
\begin{pmatrix}
M_1 & M_3\\
M_3^{*} & M_2
\end{pmatrix}
>0,
\qquad
M_1\in\mathbb C^{2N_F\times2N_F},
\qquad
M_2\in\mathbb C^{N_F\times N_F}.
\end{equation}
\end{subequations}
The permutation is chosen so that $V_1$ is nonsingular. Such a choice is possible because the two transverse vectors are linearly independent for every retained mode. With this notation, $K_r=V^{*}M^{-1}V$, and we define $W=V_1^{-*}V_2^{*}$. Because $Q_1$, $Q_2$, $Q_3$, and the normalization factors in $\U_r$ are diagonal, $V=\U_rQ_r^{1/2}$ consists of diagonal $N_F\times N_F$ blocks. Consequently, the actions of $V_1^{-1}$ and $V_1^{-*}$, as well as the construction and application of $W=V_1^{-*}V_2^*$, are performed modewise by elementwise operations on the corresponding diagonal entries. The resulting work and storage are both $\mathcal O(N_F)$. An exact explicit expression for $K_r^{-1}$ is
\begin{subequations}
\label{eq:Kr_inverse_explicit}
\begin{equation}
\label{eq:Kr_inverse_explicit_main}
K_r^{-1}
=
V_1^{-1}
\left[
M_1-(M_3-M_1W)S_M^{-1}(M_3^{*}-W^{*}M_1)
\right]
V_1^{-*},
\end{equation}
where
\begin{equation}
\label{eq:SM_def}
S_M =
M_2-M_3^{*}W-W^{*}M_3+W^{*}M_1W =
\begin{bmatrix}
-W^{*} & I
\end{bmatrix}
M
\begin{bmatrix}
-W\\
I
\end{bmatrix}
\succ0.
\end{equation}
\end{subequations}

To derive \eqref{eq:Kr_inverse_explicit_main}, consider $K_r\x=\y$ and set $\boldsymbol{\phi}=M^{-1}V\x$. Then $V^*\boldsymbol{\phi}=\y$ and $M\boldsymbol{\phi}=V\x$. Writing $\boldsymbol{\phi}=[\boldsymbol{\phi}_1^{\mathsf T},\boldsymbol{\phi}_2^{\mathsf T}]^{\mathsf T}$ gives
\begin{equation}
\label{eq:block_system_for_Kr_inverse}
\begin{cases}
M_1\boldsymbol{\phi}_1+M_3\boldsymbol{\phi}_2=V_1\x,\\
M_3^*\boldsymbol{\phi}_1+M_2\boldsymbol{\phi}_2=V_2\x,\\
V_1^*\boldsymbol{\phi}_1+V_2^*\boldsymbol{\phi}_2=\y.
\end{cases}
\end{equation}
With $\g=V_1^{-*}\y$, the third and first equations yield
\begin{equation}
\label{eq:phi_x_relations}
\boldsymbol{\phi}_1=\g-W\boldsymbol{\phi}_2,
\qquad
\x
=
V_1^{-1}
\left[
M_1\g+
(M_3-M_1W)\boldsymbol{\phi}_2
\right].
\end{equation}
Substitution into the second block equation gives
\begin{equation}
\label{eq:SM_phi2_equation}
S_M\boldsymbol{\phi}_2
=
(W^*M_1-M_3^*)\g.
\end{equation}
Solving for $\boldsymbol{\phi}_2$ and substituting into \eqref{eq:phi_x_relations} yields \eqref{eq:Kr_inverse_explicit_main}.

Although the explicit inverse formula \eqref{eq:Kr_inverse_explicit_main} involves the matrix $S_M$, the linear system associated with $S_M$ can be transformed into an equivalent system whose conditioning is controlled directly by the mass matrix $M$.

Let
\begin{equation}
\label{eq:Qtilde_def}
H=I+W^*W=R_H^*R_H,
\qquad
\widetilde Q=
\begin{bmatrix}
-W\\
I
\end{bmatrix}
R_H^{-1},
\end{equation}
where $R_H$ is the nonsingular Cholesky factor of $H$. Then
$$
\widetilde Q^*\widetilde Q
=
R_H^{-*}(I+W^*W)R_H^{-1}
=
I,
$$
so $\widetilde Q$ has orthonormal columns.

Define the orthogonally compressed mass matrix
\begin{equation}
\label{eq:Mhat_def}
\widehat{M} = \widetilde{Q}^{*}M\widetilde{Q}.
\end{equation}

Since $\begin{bmatrix}-W\\ I\end{bmatrix}=\widetilde Q R_H$, the matrix $S_M$ satisfies
\begin{equation}
\label{eq:SM_orthogonal_factorization}
S_M = R_H^*\widehat M R_H,
\qquad
S_M^{-1} = R_H^{-1}\widehat M^{-1}R_H^{-*}.
\end{equation}

Substituting \eqref{eq:SM_orthogonal_factorization} into \eqref{eq:Kr_inverse_explicit_main} yields
\begin{equation}
\label{eq:Kr_inverse_orthogonalized}
K_r^{-1} = V_1^{-1}M_1V_1^{-*} - B_r\widehat{M}^{-1}B_r^{*},
\end{equation}
where $B_r = V_1^{-1} \left( M_3-M_1W \right) R_H^{-1}$.
Therefore, each application of $K_r^{-1}$ requires the solution of a
Hermitian positive definite linear system with coefficient matrix
$\widehat{M}$, rather than $S_M$.

\begin{thm}
\label{thm:condition_number_Mhat}
Let $ \varepsilon_{\min,N} = \min_{\boldsymbol m}
\lambda_{\min}\!\left(\mathcal E(\x_{\boldsymbol m})\right), \;
\varepsilon_{\max,N} = \max_{\boldsymbol m}
\lambda_{\max}\!\left(\mathcal E(\x_{\boldsymbol m})\right)$.  Then the matrix $\widehat M$ defined in \eqref{eq:Mhat_def} is Hermitian positive definite and satisfies
\begin{equation}
\label{eq:Mhat_eigenvalue_bounds}
\varepsilon_{\min,N} =
\lambda_{\min}(M) \leq
\lambda_{\min}(\widehat M) \leq
\lambda_{\max}(\widehat M) \leq
\lambda_{\max}(M) =
\varepsilon_{\max,N}.
\end{equation}
Consequently,
\begin{equation}
\label{eq:Mhat_condition_bound}
\kappa_2(\widehat M)
\leq
\kappa_2(M)
=
\frac{\varepsilon_{\max,N}}{\varepsilon_{\min,N}}.
\end{equation}
Moreover, if
\begin{equation}
\label{eq:uniform_permittivity_bounds}
\varepsilon_{\min}I_3
\preceq
\mathcal E(\x)
\preceq
\varepsilon_{\max}I_3,
\qquad
\x\in\mathcal T^6,
\end{equation}
then
\begin{equation}
\label{eq:Mhat_permittivity_contrast_bound}
\kappa_2(\widehat M)
\leq
\kappa_2(M)
\leq
\frac{\varepsilon_{\max}}{\varepsilon_{\min}}.
\end{equation}
\end{thm}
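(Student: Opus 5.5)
The plan is to handle the two halves of \eqref{eq:Mhat_eigenvalue_bounds} separately: first show $\widehat M$ is a compression of $M$ by an isometry and apply Courant--Fischer, then compute the spectrum of $M$ exactly from its pseudospectral definition.

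\emph{Step 1: compression by an isometry.} Since $\widetilde Q^*\widetilde Q=I$ (shown just above), $\widehat M=\widetilde Q^*M\widetilde Q$ is Hermitian whenever $M$ is. For any nonzero $\y$ set $\z=\widetilde Q\y$; then $\|\z\|_2=\|\y\|_2$, and
$$
\frac{\y^*\widehat M\y}{\y^*\y}=\frac{\z^*M\z}{\z^*\z}\in[\lambda_{\min}(M),\lambda_{\max}(M)].
$$
Taking the minimum and maximum over $\y$ and using the Rayleigh--Courant--Fischer characterization gives the two inner inequalities. Positive definiteness of $\widehat M$ then follows from $\lambda_{\min}(M)>0$. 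The condition-number bound $\kappa_2(\widehat M)\le\kappa_2(M)$ follows immediately. If a modewise permutation was applied to $M$ in \eqref{eq:M_block_def_lanczos}, it is unitary and does not change the spectrum.

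\emph{Step 2: the spectrum of $M$.} This is the step requiring the most care. By \eqref{eq:pseudospectral_mass_blocks},
$$
M=(I_3\otimes\mathscr F_N)\,D_{\mathcal E}\,(I_3\otimes\mathscr F_N^{-1}),
$$
where $D_{\mathcal E}=[D_{\mathcal E}^{(i,j)}]_{i,j=1}^3$. I will fix the normalization so that $\mathscr F_N^{-1}=\mathscr F_N^*$, i.e.\ the unitary DFT. Any scalar scaling $c\,\mathscr F_N$ paired with $c^{-1}\mathscr F_N^{-1}$ leaves $M$ unchanged, because the definition uses $\mathscr F_N$ and its inverse, so this choice costs nothing. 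Then $M$ is unitarily similar to $D_{\mathcal E}$.

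Next, apply the perfect-shuffle permutation that regroups the entries from component ordering to grid-point ordering. This turns $D_{\mathcal E}$ into the block-diagonal matrix $\operatorname{diag}(\mathcal E(\x_{\boldsymbol m}))_{\boldsymbol m}$ with $3\times3$ symmetric positive definite blocks. Its spectrum is the union of the spectra of the blocks. Hence
$$
\lambda_{\min}(M)=\min_{\boldsymbol m}\lambda_{\min}(\mathcal E(\x_{\boldsymbol m}))=\varepsilon_{\min,N},
\qquad
\lambda_{\max}(M)=\varepsilon_{\max,N},
$$
which gives the outer equalities and $\kappa_2(M)=\varepsilon_{\max,N}/\varepsilon_{\min,N}$.

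\emph{Step 3: the continuous contrast bound.} Under \eqref{eq:uniform_permittivity_bounds}, every sampled tensor satisfies $\varepsilon_{\min}I_3\preceq\mathcal E(\x_{\boldsymbol m})\preceq\varepsilon_{\max}I_3$. Therefore $\varepsilon_{\min,N}\ge\varepsilon_{\min}$ and $\varepsilon_{\max,N}\le\varepsilon_{\max}$, which yields \eqref{eq:Mhat_permittivity_contrast_bound}.
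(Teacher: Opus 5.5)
Your proof is correct and follows the same route as the paper. The paper also writes $M$ as a unitary similarity of the permuted block-diagonal matrix of sampled tensors $\mathcal E(\x_{\boldsymbol m})$ to obtain $\lambda_{\min}(M)=\varepsilon_{\min,N}$ and $\lambda_{\max}(M)=\varepsilon_{\max,N}$, bounds the Rayleigh quotient of $\widehat M$ using $\widetilde Q^*\widetilde Q=I$, and then compares the sampled bounds with the uniform ones; your treatment of the DFT normalization, the perfect-shuffle permutation, and the modewise permutation spells out details the paper leaves implicit.
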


\begin{proof}
By the construction of the Fourier-space mass matrix, $M=\mathscr F^*\mathcal D_{\mathcal E}\mathscr F$, where $\mathscr F$ is the normalized block discrete Fourier transform and
$\mathcal D_{\mathcal E}$ is permutation-similar to
$\operatorname{diag}_{\boldsymbol m}
\bigl(\mathcal E(\x_{\boldsymbol m})\bigr)$.
Since $\mathscr F$ is unitary,
\[
\lambda_{\min}(M)=\varepsilon_{\min,N},
\qquad
\lambda_{\max}(M)=\varepsilon_{\max,N}.
\]

Since $\widetilde Q^*\widetilde Q=I$, for every unit vector $\x$,
\[
\lambda_{\min}(M)
\leq
(\widetilde Q\x)^*M(\widetilde Q\x)
=
\x^*\widehat M\x
\leq
\lambda_{\max}(M).
\]
The Rayleigh-Ritz characterization gives
\eqref{eq:Mhat_eigenvalue_bounds} and
\eqref{eq:Mhat_condition_bound}.

Finally, \eqref{eq:uniform_permittivity_bounds} implies
$\varepsilon_{\min,N}\geq\varepsilon_{\min}$ and
$\varepsilon_{\max,N}\leq\varepsilon_{\max}$, which yields
\eqref{eq:Mhat_permittivity_contrast_bound}.
\end{proof}

Thus, the orthogonalized formulation eliminates the dependence of the inner CG conditioning on the nonorthogonal basis factor $[-W^{\mathsf T},I]^{\mathsf T}$. In particular, the transformed coefficient matrix satisfies
$\kappa_2(\widehat M)\leq\kappa_2(M)$ independently of the singular values of $W$.

In practical inverse Lanczos iterations, the inner system involving $\widehat M$ is solved by CG to a prescribed stopping tolerance, so the resulting inverse action is affected by the finite inner-solve residual. Motivated by perturbation analyses for Krylov and projection methods with approximate matrix-vector products \cite{SimonciniSzyld2003,Simoncini2005}, we derive a problem-specific estimate that separates the effects of the finite inner solves and full reorthogonalization on the inverse Ritz residual.

At the $j$-th Lanczos step, let $\widehat{\b}_j=B_r^*\z_j$ and let $\widehat{\c}_j$ be the exact solution of $\widehat M\widehat{\c}_j=\widehat{\b}_j$. Then
\begin{equation}
\label{eq:exact_inverse_action_inner}
K_r^{-1}\z_j
=
V_1^{-1}M_1V_1^{-*}\z_j
-
B_r\widehat{\c}_j.
\end{equation}
If CG produces $\widetilde{\c}_j$ with residual $\r_j=\widehat{\b}_j-\widehat M\widetilde{\c}_j$, then
$\widetilde{\c}_j-\widehat{\c}_j=-\widehat M^{-1}\r_j$. Hence the computed inverse action satisfies
\begin{equation}
\label{eq:cg_perturbed_inverse_action}
\widetilde{\p}_j
=
V_1^{-1}M_1V_1^{-*}\z_j
-
B_r\widetilde{\c}_j
=
K_r^{-1}\z_j+\f_j,
\qquad
\f_j=B_r\widehat M^{-1}\r_j.
\end{equation}
Consequently,
\begin{equation}
\label{eq:fj_bound}
\|\f_j\|_2
\leq
\frac{\|B_r\|_2}{\lambda_{\min}(\widehat M)}
\|\r_j\|_2.
\end{equation}
If CG is terminated when $\|\r_j\|_2/\|\widehat{\b}_j\|_2\leq\tau_{\mathrm{CG}}$, then $\|\widehat{\b}_j\|_2\leq\|B_r\|_2$ for $\|\z_j\|_2=1$, and therefore
\begin{equation}
\label{eq:eta_bound}
\|\f_j\|_2
\leq
\frac{\|B_r\|_2^2}
{\lambda_{\min}(\widehat M)}
\tau_{\mathrm{CG}}
\equiv\eta.
\end{equation}

For the following perturbation analysis, we consider one cycle of the fully reorthogonalized Lanczos iteration with finite-tolerance inverse actions. Collect the Lanczos vectors in $Z_m=[\z_1,\ldots,\z_m]$, with $Z_m^*Z_m=I_m$, and define $F_m=[\f_1,\ldots,\f_m]$. Let $T_m$ denote the nominal Lanczos tridiagonal matrix, and let $C_m\in\mathbb C^{m\times m}$ collect the additional coefficients removed by full reorthogonalization that are not represented in $T_m$. With $\e_m$ denoting the $m$th coordinate vector, the computed recurrence can be written as
\begin{equation}
\label{eq:perturbed_lanczos_recurrence}
K_r^{-1}Z_m+F_m = Z_m(T_m+C_m) + \beta_m\z_{m+1}\e_m^{\mathsf T}.
\end{equation}
For an exact Hermitian Lanczos process in exact arithmetic, the additional coefficients vanish. With independently terminated inner iterations, however, $C_m$ need not be zero.

\begin{thm}
\label{thm:inverse_lanczos_ritz_bound}
Let $T_m\s=\theta\s$, $\|\s\|_2=1$, and $\v=Z_m\s$. Then
\begin{equation}
\label{eq:ritz_residual_bound}
\|K_r^{-1}\v-\theta\v\|_2 \leq
r_m^{\mathrm{Lan}} + \|C_m\s\|_2 + \sqrt{m}\,\eta,
\end{equation}
where $ r_m^{\mathrm{Lan}} = |\beta_m\e_m^{\mathsf T}\s| $ is the nominal three-term recurrence residual, and $\eta$ is defined in \eqref{eq:eta_bound}.
\end{thm}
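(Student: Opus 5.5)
The plan is to multiply the perturbed recurrence \eqref{eq:perturbed_lanczos_recurrence} on the right by the Ritz vector $\s$, then read off the residual and bound each term with the triangle inequality. Since $\v=Z_m\s$ and $T_m\s=\theta\s$, we obtain
\begin{equation*}
K_r^{-1}\v + F_m\s = Z_m T_m\s + Z_m C_m\s + \beta_m \z_{m+1}\e_m^{\mathsf T}\s
= \theta\v + Z_m C_m\s + \beta_m(\e_m^{\mathsf T}\s)\,\z_{m+1}.
\end{equation*}
Rearranging gives the exact identity
\begin{equation*}
K_r^{-1}\v-\theta\v = \beta_m(\e_m^{\mathsf T}\s)\,\z_{m+1} + Z_m C_m\s - F_m\s .
\end{equation*}

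Each of the three terms is then bounded separately. For the first term, full reorthogonalization gives $\|\z_{m+1}\|_2=1$, so its norm is $|\beta_m\e_m^{\mathsf T}\s|=r_m^{\mathrm{Lan}}$. If $\beta_m=0$ the term vanishes, and the convention $\z_{m+1}=0$ is harmless. For the second term, $Z_m^*Z_m=I_m$ makes $Z_m$ an isometry, so $\|Z_mC_m\s\|_2=\|C_m\s\|_2$.

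For the third term, $F_m\s=\sum_{j=1}^m s_j\f_j$. By the triangle and Cauchy--Schwarz inequalities,
\begin{equation*}
\|F_m\s\|_2\le \sum_j |s_j|\,\|\f_j\|_2 \le \eta\sum_j|s_j| \le \eta\sqrt m\,\|\s\|_2=\sqrt m\,\eta .
\end{equation*}
This uses \eqref{eq:eta_bound} for each $j$, which applies because every Lanczos vector $\z_j$ is a unit vector. An alternative route is $\|F_m\|_2\le\|F_m\|_F\le\sqrt m\,\eta$. Summing the three bounds yields \eqref{eq:ritz_residual_bound}.

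The only step needing care is bookkeeping rather than analysis. We must check that the $\eta$ bound in \eqref{eq:eta_bound} was derived under $\|\z_j\|_2=1$, which is guaranteed by the orthonormality $Z_m^*Z_m=I_m$. We must also confirm that $\theta$ is an eigenvalue of the nominal $T_m$ and not of $T_m+C_m$. This is exactly why $C_m\s$ appears as a separate term instead of being absorbed into the Ritz relation.
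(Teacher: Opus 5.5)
Your proposal is correct and follows the paper's proof: you multiply the perturbed recurrence by $\s$, use $T_m\s=\theta\s$, $Z_m^*Z_m=I_m$ and $\|\z_{m+1}\|_2=1$, and apply the triangle inequality. For the $F_m\s$ term, your $\ell_1$/Cauchy--Schwarz bound and your alternative $\|F_m\s\|_2\le\|F_m\|_F\le\sqrt m\,\eta$ (the paper's version) both rely on \eqref{eq:eta_bound} holding for each unit vector $\z_j$, which you correctly checked.
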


\begin{proof}
Multiplying \eqref{eq:perturbed_lanczos_recurrence} by $\s$ and using $T_m\s=\theta\s$ and $\v=Z_m\s$ give
$$
K_r^{-1}\v-\theta\v = Z_mC_m\s + \beta_m\z_{m+1}\e_m^{\mathsf T}\s - F_m\s.
$$
Since $Z_m^*Z_m=I_m$ and $\|\z_{m+1}\|_2=1$, the triangle inequality yields
\begin{align*}
\|K_r^{-1}\v-\theta\v\|_2
&\leq
\|C_m\s\|_2 + |\beta_m\e_m^{\mathsf T}\s| + \|F_m\s\|_2 \\
&\leq
\|C_m\s\|_2 + r_m^{\mathrm{Lan}} + \|F_m\|_F .
\end{align*}
Finally, \eqref{eq:eta_bound} implies $\|F_m\|_F = \left( \sum_{j=1}^{m}\|\f_j\|_2^2 \right)^{1/2} \leq \sqrt{m}\,\eta$, which proves \eqref{eq:ritz_residual_bound}.
\end{proof}
The implementation monitors $r_m^{\mathrm{Lan}}$ as a stopping indicator. The additional terms in \eqref{eq:ritz_residual_bound} are not included in this indicator.

The inverse representation \eqref{eq:Kr_inverse_orthogonalized} is implemented in operator form, without explicitly forming $V_1^{-1}$, $V_1^{-*}$, or $\widehat M^{-1}$. The complete procedure for computing the smallest positive Maxwell eigenpairs is summarized in Algorithm~\ref{alg:inverse_lanczos}.

\begin{algorithm}[htbp]
\caption{Inverse Lanczos method for the null-space free EVP
\eqref{eq:null_space_free_EVP}}
\label{alg:inverse_lanczos}
\begin{algorithmic}[1]

\Require Mass matrix $M$, transverse basis $\U_r$, positive diagonal matrix $Q_r$; number of desired eigenpairs $n_{\mathrm{ev}}$; nonzero initial vector $\z_1$; tolerances $\tau_{\mathrm{CG}}$ and $\tau_{\mathrm{Lan}}$, Krylov dimension $m_{\max}$, and iteration/restart limits.

\Ensure The $n_{\mathrm{ev}}$ smallest positive eigenvalues $\widetilde{\lambda}_i$, the corresponding reduced eigenvectors $\v_i$, and Fourier eigenvectors $\U_i$, $i=1,\ldots,n_{\mathrm{ev}}$.

\State Form $V=\U_rQ_r^{1/2}$ and partition $V=[V_1^{\mathsf T},V_2^{\mathsf T}]^{\mathsf T}$ and $M=\begin{bmatrix}M_1&M_3\\M_3^*&M_2\end{bmatrix}$ as in \eqref{eq:V_M_block_def}, with $V_1$ nonsingular.

\State Set $W=V_1^{-*}V_2^*$ and construct $R_H$, $B_r$, and $\widehat M$ using \eqref{eq:Qtilde_def}-\eqref{eq:Kr_inverse_orthogonalized}.

\State Normalize $\z_1$ and apply fully reorthogonalized inverse Lanczos using \eqref{eq:Kr_inverse_orthogonalized}, solving the $\widehat M$-systems by CG to tolerance $\tau_{\mathrm{CG}}$. Select the $n_{\mathrm{ev}}$ largest Ritz pairs $(\theta_i,\v_i)$ using the stated recurrence-based stopping criterion and iteration limits.

\State Set $\widetilde{\lambda}_i=\theta_i^{-1}$, $i=1,\ldots,n_{\mathrm{ev}}$.

\For{$i=1,\ldots,n_{\mathrm{ev}}$}
\State Solve $M\U_i=\U_rQ_r^{1/2}\v_i$ and normalize $\U_i^*M\U_i=1$.
\EndFor

\end{algorithmic}
\end{algorithm}

The inverse Lanczos procedure therefore produces approximations to the positive Fourier eigenpairs of the original discrete Maxwell problem. These eigenpairs are still represented in the 6D Fourier discretization. To interpret the computed modes on the physical quasiperiodic cut and to assess their consistency independently of the Fourier discretization, we next reconstruct the corresponding fields on a 3D staggered Yee grid and introduce cropped Yee residual and local Rayleigh-quotient measures for assessing physical-space consistency.

\section{Physical-Space Reconstruction and Local Rayleigh Quotient Analysis}
\label{sec:LWRQ_error_analysis}

The null-space free inverse Lanczos method yields a positive eigenpair $(\widetilde{\lambda},\U)$ of the Fourier-discretized Maxwell problem \eqref{eq:discrete_gevp}. Restriction of $\U$ to the physical cut induces a 3D quasiperiodic field, whereas both $\U$ and $\widetilde{\lambda}$ arise from the 6D Fourier discretization. We therefore reconstruct this field on a 3D staggered Yee grid and evaluate the associated cropped Yee residual and Rayleigh quotient. LWRQs are further combined by mass-weighted averaging to obtain a scalar spectral estimate.

\subsection{Yee operators and cropped sampling windows}

We use the standard staggered Yee discretization~\cite{Yee1966} on a uniform mesh of size $h$. Let $\mathcal C_j$ denote the forward difference in the $j$th coordinate direction and $\mathcal C_j^\sharp$ its formal adjoint. The discrete curl and curl-curl operators are
\begin{equation}
\mathcal C
=
\begin{pmatrix}
0 & -\mathcal C_3 & \mathcal C_2\\
\mathcal C_3 & 0 & -\mathcal C_1\\
-\mathcal C_2 & \mathcal C_1 & 0
\end{pmatrix},
\qquad
\mathcal A=\mathcal C^\sharp\mathcal C.
\label{eq:A_CTC_block}
\end{equation}
Thus, $\mathcal A$ is the usual Yee curl-curl stencil acting on the three staggered electric-field components. For the Yee postprocessing, we restrict the permittivity to tensors diagonal in the physical coordinate axes, including all media used below. The mass operator $\mathcal B$ acts componentwise by multiplication with $\varepsilon_{\ell\ell}(\r_{\boldsymbol\zeta}^{(\ell)})$ at the corresponding Yee point, and its diagonal entries are uniformly positive.

For the physical-space consistency analysis, let $\Omega_n=\{\boldsymbol{\zeta}=(i,j,k)\in\mathbb Z^3:-n\leq i,j,k\leq n\}$ denote the interior sampling indices. Since the curl-curl stencil on $\Omega_n$ requires neighboring Yee values, the field is reconstructed on the enlarged set $\Omega_n^{+}=\{\boldsymbol{\zeta}\in\mathbb Z^3:\operatorname{dist}_{\infty}(\boldsymbol{\zeta},\Omega_n)\leq1\}$.

If $\widehat{\u}$ denotes the reconstructed Yee field on $\Omega_n^{+}$ and $\widehat{\u}_{\mathrm{o}}$ its restriction to $\Omega_n$, the cropped operator actions are defined by
\begin{subequations}
\label{total:finite_yee_matrix_derive}
\begin{align}
\widehat A\widehat{\u} &= \mathcal A_{\{\Omega_n,\Omega_n^{+}\}}\widehat{\u},
\label{eq:cropped_A_matrix}
\\
\widehat B\widehat{\u}_{\mathrm{o}} &= \mathcal B_{\{\Omega_n,\Omega_n\}}\widehat{\u}_{\mathrm{o}}.
\label{eq:cropped_B_matrix}
\end{align}
\end{subequations}
These cropped operators are used below only for the physical-space Rayleigh-quotient and residual computations.

\subsection{Memory-efficient reconstruction of the 3D Yee field}
\label{subsec:memory_efficient_Yee_reconstruction}

For $\boldsymbol{\xi}\in\mathcal J_v$, recall that
$\q_{\boldsymbol{\xi}}=P^{\mathsf T}(\k+\boldsymbol{\xi})$.
For a Fourier eigenvector, the $\ell$th component of the corresponding physical field at a Yee location is
\begin{equation}
\label{eq:direct_Yee_field_reconstruction}
\widehat{u}^{(\ell)}
\left(
\r_{\boldsymbol{\zeta}}^{(\ell)}
\right)
=
\sum_{\boldsymbol{\xi}\in\mathcal{J}_v}
U_{\ell,\boldsymbol{\xi}}
\exp
\left( \imath \q_{\boldsymbol{\xi}}^{\mathsf{T}} \r_{\boldsymbol{\zeta}}^{(\ell)}
\right), \qquad \ell=1,2,3,
\end{equation}
where $x_i=ih$, $y_j=jh$, and $z_k=kh$, and a hat over an index denotes a half-grid shift in the corresponding coordinate direction, e.g., $x_{\hat{i}}=(i+\frac12)h$. Thus, $\r_{\boldsymbol{\zeta}}^{(1)}=(x_{\hat{i}},y_j,z_k)$, $\r_{\boldsymbol{\zeta}}^{(2)}=(x_i,y_{\hat{j}},z_k)$, and $\r_{\boldsymbol{\zeta}}^{(3)}=(x_i,y_j,z_{\hat{k}})$.

Let $N_g=\#\Omega_n^{+}$ and $N_F=\#\mathcal{J}_v$. A direct simultaneous evaluation of \eqref{eq:direct_Yee_field_reconstruction} would require a dense phase matrix
\begin{equation}
\label{eq:full_phase_matrix}
\Phi^{(\ell)} =
\left[
\exp
\left( \imath \q_{\boldsymbol{\xi}}^{\mathsf{T}} \r_{\boldsymbol{\zeta}}^{(\ell)}
\right)
\right]_{
\boldsymbol{\zeta}\in\Omega_n^{+}, \,
\boldsymbol{\xi}\in\mathcal{J}_v
}
\in
\mathbb{C}^{N_g\times N_F}.
\end{equation}
The storage requirement of \eqref{eq:full_phase_matrix} is $\mathcal O(N_gN_F)$, which becomes prohibitive when both the 3D sampling region and the 6D Fourier index set are large. To avoid forming this dense phase matrix, we exploit the local phase structure of the projected Fourier expansion and construct a separated Taylor representation.

Specifically, we partition $\Omega_n^+$ into $S$ pairwise disjoint subsets $\Omega_s$ and assign a center $\r_{0,s}^{(\ell)}$ to each component on each subset. In the following local formulas, we suppress the subset index, write $\r_0=\r_{0,s}^{(\ell)}$, and set $\boldsymbol{\delta}_{\boldsymbol{\zeta}}^{(\ell)}=\r_{\boldsymbol{\zeta}}^{(\ell)}-\r_0$ for $\boldsymbol{\zeta}\in\Omega_s$. For a multi-index $\boldsymbol{\alpha}=(\alpha_1,\alpha_2,\alpha_3)\in\mathbb N^3$, where $\mathbb N=\{0,1,2,\ldots\}$, write $|\boldsymbol{\alpha}|=\alpha_1+\alpha_2+\alpha_3$, $\boldsymbol{\alpha}!=\alpha_1!\alpha_2!\alpha_3!$, $\q_{\boldsymbol{\xi}}^{\boldsymbol{\alpha}}=\prod_{s=1}^3q_{\boldsymbol{\xi},s}^{\alpha_s}$, and $(\boldsymbol{\delta}_{\boldsymbol{\zeta}}^{(\ell)})^{\boldsymbol{\alpha}}=\prod_{s=1}^3(\delta_{\boldsymbol{\zeta},s}^{(\ell)})^{\alpha_s}$.

Truncating the multivariate Taylor expansion at total degree $p$ gives
\begin{equation}
\label{eq:multivariate_Taylor_exponential}
\exp
\left(
\imath \q_{\boldsymbol{\xi}}^{\mathsf{T}} \boldsymbol{\delta}_{\boldsymbol{\zeta}}^{(\ell)}
\right)
\approx
\sum_{|\boldsymbol{\alpha}|\leq p}
\frac{
\imath^{|\boldsymbol{\alpha}|}
}{
\boldsymbol{\alpha}!
}
\q_{\boldsymbol{\xi}}^{\boldsymbol{\alpha}}
\left(
\boldsymbol{\delta}_{\boldsymbol{\zeta}}^{(\ell)}
\right)^{\boldsymbol{\alpha}}.
\end{equation}
For each component and multi-index, define the Fourier moment
\begin{equation}
\label{eq:Fourier_moment_Taylor_reconstruction}
\gamma_{\boldsymbol{\alpha}}^{(\ell)}
=
\sum_{\boldsymbol{\xi}\in\mathcal{J}_v}
U_{\ell,\boldsymbol{\xi}}
\exp
\left(\imath \q_{\boldsymbol{\xi}}^{\mathsf{T}} \r_0
\right)
\q_{\boldsymbol{\xi}}^{\boldsymbol{\alpha}}.
\end{equation}
Substituting \eqref{eq:multivariate_Taylor_exponential} into \eqref{eq:direct_Yee_field_reconstruction} yields
\begin{equation}
\label{eq:Taylor_Yee_field_reconstruction}
\widehat{u}^{(\ell)}
\left(
\r_{\boldsymbol{\zeta}}^{(\ell)}
\right)
\approx
\sum_{|\boldsymbol{\alpha}|\leq p}
\frac{
\imath^{|\boldsymbol{\alpha}|}
}{
\boldsymbol{\alpha}!
}
\left(
\boldsymbol{\delta}_{\boldsymbol{\zeta}}^{(\ell)}
\right)^{\boldsymbol{\alpha}}
\gamma_{\boldsymbol{\alpha}}^{(\ell)},
\qquad
\boldsymbol{\zeta}\in\Omega_s.
\end{equation}
Let $ L_p=\#\{\boldsymbol{\alpha}\in\mathbb N^3:|\boldsymbol{\alpha}|\le p\} =\binom{p+3}{3}$.

Define the maximum centered phase radius for the $\ell$-th component by
\begin{equation}
\label{eq:Taylor_phase_radius}
r_{\mathrm{ph}}^{(\ell)} =
\max_{
\substack{
\boldsymbol{\zeta}\in\Omega_s\\
\boldsymbol{\xi}\in\mathcal{J}_v } }
\left|
\q_{\boldsymbol{\xi}}^{\mathsf{T}} \boldsymbol{\delta}_{\boldsymbol{\zeta}}^{(\ell)}
\right|.
\end{equation}
Using the standard Taylor remainder for $e^{\imath t}$ with real $t$ and the triangle inequality, the field-reconstruction error satisfies
\begin{equation}
\left|
\widehat{u}^{(\ell)}
\left( \r_{\boldsymbol{\zeta}}^{(\ell)} \right) -
\sum_{|\boldsymbol{\alpha}|\leq p}
\frac{ \imath^{|\boldsymbol{\alpha}|} }{ \boldsymbol{\alpha}! }
\left( \boldsymbol{\delta}_{\boldsymbol{\zeta}}^{(\ell)} \right)^{\boldsymbol{\alpha}}
\gamma_{\boldsymbol{\alpha}}^{(\ell)}
\right|
\leq
\frac{
\left( r_{\mathrm{ph}}^{(\ell)} \right)^{p+1}
}{ (p+1)! }
\sum_{\boldsymbol{\xi}\in\mathcal{J}_v}
\left|
U_{\ell,\boldsymbol{\xi}}
\right|.
\label{eq:Taylor_Yee_reconstruction_error}
\end{equation}

Let $r_{\mathrm{ph},\max} =\max_{\ell,s}r_{\mathrm{ph},s}^{(\ell)}$, where $r_{\mathrm{ph},s}^{(\ell)}$ denotes the local radius in \eqref{eq:Taylor_phase_radius} on $\Omega_s$. Reducing the local radii tightens the error bound \eqref{eq:Taylor_Yee_reconstruction_error}. For one Fourier field, forming the moments at all centers and evaluating the local polynomials require $\mathcal O(SL_pN_F+L_pN_g)$ operations. The dense $N_g\times N_F$ phase matrix is avoided; working storage depends on batching and the cached moment and monomial tables.

\subsection{LWRQs and physical-space consistency}

Weighted Rayleigh quotients for reconstructed scalar quasiperiodic fields were introduced in \cite{SunLiLinLyu2026}. The present Maxwell setting requires a different construction because the three staggered field components are coupled by the curl-curl operator, and direct componentwise quotients may be ill-conditioned when individual components are locally small. We therefore average the local energy and mass densities separately before forming their ratio.

The Rayleigh quotient of the cropped Yee system is
\begin{equation}
\label{eq:cropped_global_RQ_def}
R_{\Omega_n} \left( \widehat{\u} \right) =
\frac{ \operatorname{Re}\!\left(\widehat{\u}_{\mathrm{o}}^{*} \widehat{A}\widehat{\u} \right) }
{ \widehat{\u}_{\mathrm{o}}^{*} \widehat{B}\widehat{\u}_{\mathrm{o}} },
\qquad
\widehat{\u}_{\mathrm{o}}^{*} \widehat{B}\widehat{\u}_{\mathrm{o}} >0.
\end{equation}
For each $\boldsymbol\eta\in\Omega_n$, let $\widehat B_{\boldsymbol\eta}\in\mathbb R^{3\times3}$ be the positive diagonal mass block defined by these componentwise samples, so that $(\widehat B\widehat{\u}_{\mathrm{o}})_{\boldsymbol\eta} =\widehat B_{\boldsymbol\eta}\widehat{\u}_{\boldsymbol\eta}$.

Let $N(\boldsymbol{\zeta})\subseteq\Omega_n$ be a neighborhood of $\boldsymbol{\zeta}$. We extend every weight by zero outside its neighborhood $ w_{\boldsymbol{\zeta}\boldsymbol{\eta}} = 0, \boldsymbol{\eta}\notin N(\boldsymbol{\zeta})$. We define the weighted energy and mass by
\begin{equation}
\label{eq:weighted_energy_mass_density}
E_{\boldsymbol{\zeta}} =
\sum_{\boldsymbol{\eta}\in\Omega_n} w_{\boldsymbol{\zeta}\boldsymbol{\eta}} \operatorname{Re}
\left[\widehat{\u}_{\boldsymbol{\eta}}^{*} \left( \widehat{A}\widehat{\u} \right)_{\boldsymbol{\eta}}\right],
\qquad
M_{\boldsymbol{\zeta}} = 
\sum_{\boldsymbol{\eta}\in\Omega_n} w_{\boldsymbol{\zeta}\boldsymbol{\eta}} \widehat{\u}_{\boldsymbol{\eta}}^{*}
\left( \widehat{B}\widehat{\u}_{\mathrm{o}} \right)_{\boldsymbol{\eta}}.
\end{equation}
Throughout the LWRQ analysis and postprocessing, we assume $M_{\boldsymbol\zeta}>0$ for every $\boldsymbol\zeta\in\Omega_n$ and define the LWRQ by
\begin{equation}
\label{eq:weighted_local_RQ_def}
\rho_{\boldsymbol{\zeta}} =
\frac{
E_{\boldsymbol{\zeta}}
}{
M_{\boldsymbol{\zeta}}
}.
\end{equation}

Let $\r_{\boldsymbol{\zeta}}$ denote a representative cell coordinate. Define the unnormalized Gaussian weights by
$$
\widetilde w_{\boldsymbol{\zeta}\boldsymbol{\eta}}
=
\begin{cases}
\exp\!\left(
-\|\r_{\boldsymbol{\zeta}}-\r_{\boldsymbol{\eta}}\|_2^2/\sigma^2
\right),
& \boldsymbol{\eta}\in N(\boldsymbol{\zeta}),\\
0,
& \text{otherwise}.
\end{cases}
$$
Assuming the normalization denominator is nonzero, define
\begin{equation}
w_{\boldsymbol{\zeta}\boldsymbol{\eta}}
=
\frac{\widetilde{w}_{\boldsymbol{\zeta}\boldsymbol{\eta}}}
{\sum_{\boldsymbol{\zeta}'\in\Omega_n}
\widetilde{w}_{\boldsymbol{\zeta}'\boldsymbol{\eta}}},
\qquad
\sum_{\boldsymbol{\zeta}\in\Omega_n}
w_{\boldsymbol{\zeta}\boldsymbol{\eta}}=1,
\qquad  \boldsymbol{\eta}\in\Omega_n
\label{eq:partition_unity_weight}
\end{equation}
The mass-weighted probability and the corresponding LWRQ expectation are
\begin{equation}
p_{\boldsymbol{\zeta}} =
\frac{M_{\boldsymbol{\zeta}}}
{\sum_{\boldsymbol{\eta}\in\Omega_n}M_{\boldsymbol{\eta}}},
\qquad
\widehat{\lambda}_{\mathrm{LWRQ}} =
\sum_{\boldsymbol{\zeta}\in\Omega_n}
p_{\boldsymbol{\zeta}}\rho_{\boldsymbol{\zeta}}.
\label{eq:weighted_expectation_local_RQ}
\end{equation}

\begin{thm}
\label{thm:weighted_local_RQ_to_global_RQ}
Suppose that the nonnegative weights satisfy \eqref{eq:partition_unity_weight} and that $M_{\boldsymbol\zeta}>0$ for all $\boldsymbol\zeta\in\Omega_n$. Then the weighted expectation in \eqref{eq:weighted_expectation_local_RQ} coincides with the Rayleigh quotient of the cropped Yee system:
\begin{equation}
\label{eq:weighted_expectation_equals_global_RQ}
\mathbb{E}_{\mathrm{p}}
\left[ \left\{ \rho_{\boldsymbol{\zeta}} \right\}_{\boldsymbol{\zeta}\in\Omega_n} \right]
=
R_{\Omega_n} \left( \widehat{\u} \right)
=
\frac{ \operatorname{Re}\! \left(\widehat{\u}_{\mathrm{o}}^{*} \widehat{A}\widehat{\u} \right)}
{ \widehat{\u}_{\mathrm{o}}^{*} \widehat{B}\widehat{\u}_{\mathrm{o}}}.
\end{equation}
\end{thm}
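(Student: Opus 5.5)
The argument is a direct telescoping computation; no earlier theorem is needed beyond the definitions \eqref{eq:weighted_energy_mass_density}--\eqref{eq:weighted_expectation_local_RQ}. Since $M_{\boldsymbol\zeta}>0$ for every $\boldsymbol\zeta$, each $\rho_{\boldsymbol\zeta}$ is well defined and $p_{\boldsymbol\zeta}$ is a genuine probability vector. The first step is to observe the cancellation
$$
p_{\boldsymbol\zeta}\rho_{\boldsymbol\zeta}
=\frac{M_{\boldsymbol\zeta}}{\sum_{\boldsymbol\eta}M_{\boldsymbol\eta}}\cdot\frac{E_{\boldsymbol\zeta}}{M_{\boldsymbol\zeta}}
=\frac{E_{\boldsymbol\zeta}}{\sum_{\boldsymbol\eta}M_{\boldsymbol\eta}} .
$$
Summing over $\boldsymbol\zeta\in\Omega_n$ then gives $\widehat\lambda_{\mathrm{LWRQ}}=\bigl(\sum_{\boldsymbol\zeta}E_{\boldsymbol\zeta}\bigr)/\bigl(\sum_{\boldsymbol\zeta}M_{\boldsymbol\zeta}\bigr)$. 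This reduces the claim to identifying the numerator and denominator separately with the corresponding terms of $R_{\Omega_n}(\widehat\u)$.

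Next I will use the zero extension of the weights to write each $E_{\boldsymbol\zeta}$ and $M_{\boldsymbol\zeta}$ as a sum over all $\boldsymbol\eta\in\Omega_n$. Both sums are finite, so the order of summation can be exchanged freely. This gives
$$
\sum_{\boldsymbol\zeta}E_{\boldsymbol\zeta}=\sum_{\boldsymbol\eta\in\Omega_n}\Bigl(\sum_{\boldsymbol\zeta\in\Omega_n}w_{\boldsymbol\zeta\boldsymbol\eta}\Bigr)\operatorname{Re}\bigl[\widehat\u_{\boldsymbol\eta}^*(\widehat A\widehat\u)_{\boldsymbol\eta}\bigr].
$$
By the partition-of-unity condition \eqref{eq:partition_unity_weight}, the inner sum equals $1$, so the total energy is $\sum_{\boldsymbol\eta}\operatorname{Re}[\widehat\u_{\boldsymbol\eta}^*(\widehat A\widehat\u)_{\boldsymbol\eta}]$.

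The same manipulation gives $\sum_{\boldsymbol\zeta}M_{\boldsymbol\zeta}=\sum_{\boldsymbol\eta}\widehat\u_{\boldsymbol\eta}^*\widehat B_{\boldsymbol\eta}\widehat\u_{\boldsymbol\eta}$. It remains to recognize these blockwise sums as the global inner products. Because $\widehat A\widehat\u$ is indexed by $\Omega_n$ (see \eqref{eq:cropped_A_matrix}), the vector $\widehat\u_{\mathrm o}$ is the restriction of $\widehat\u$ to $\Omega_n$, and $\operatorname{Re}$ is additive, we get $\sum_{\boldsymbol\eta}\operatorname{Re}[\widehat\u_{\boldsymbol\eta}^*(\widehat A\widehat\u)_{\boldsymbol\eta}]=\operatorname{Re}(\widehat\u_{\mathrm o}^*\widehat A\widehat\u)$. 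Likewise, $\sum_{\boldsymbol\eta}\widehat\u_{\boldsymbol\eta}^*(\widehat B\widehat\u_{\mathrm o})_{\boldsymbol\eta}=\widehat\u_{\mathrm o}^*\widehat B\widehat\u_{\mathrm o}$. This denominator is a sum of positive $M_{\boldsymbol\zeta}$, hence positive, which is consistent with \eqref{eq:cropped_global_RQ_def}. Combining the two identities yields \eqref{eq:weighted_expectation_equals_global_RQ}.

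The computation is essentially trivial. The only point that needs care is that the partition of unity in \eqref{eq:partition_unity_weight} is a sum over the \emph{first} index $\boldsymbol\zeta$ for each fixed $\boldsymbol\eta$, and this is exactly the index that disappears after the summation is interchanged. The other minor point is checking that the local blocks $\widehat\u_{\boldsymbol\eta}$ and $(\widehat A\widehat\u)_{\boldsymbol\eta}$, which collect the three staggered components attached to the index $\boldsymbol\eta$, partition the entries of $\widehat\u_{\mathrm o}$ and $\widehat A\widehat\u$ without overlap. This holds because each Yee component is indexed by a unique $\boldsymbol\zeta\in\Omega_n$.
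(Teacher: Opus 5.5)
Your proposal is correct and follows the paper's proof essentially step for step. You cancel $M_{\boldsymbol\zeta}$ to write $\widehat\lambda_{\mathrm{LWRQ}}$ as $\sum_{\boldsymbol\zeta}E_{\boldsymbol\zeta}/\sum_{\boldsymbol\zeta}M_{\boldsymbol\zeta}$, interchange the finite sums, and apply the partition of unity \eqref{eq:partition_unity_weight} over the first index to recover the cropped Yee quotient $R_{\Omega_n}(\widehat{\u})$.
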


\begin{proof}
Using \eqref{eq:weighted_energy_mass_density}, \eqref{eq:weighted_local_RQ_def}, and \eqref{eq:weighted_expectation_local_RQ}, we obtain
\begin{equation*}
\widehat{\lambda}_{\mathrm{LWRQ}} =
\frac{ \sum_{\boldsymbol{\zeta}\in\Omega_n} E_{\boldsymbol{\zeta}} }
{ \sum_{\boldsymbol{\zeta}\in\Omega_n} M_{\boldsymbol{\zeta}} } =
\frac{ \sum_{\boldsymbol{\eta}\in\Omega_n}
\left( \sum_{\boldsymbol{\zeta}\in\Omega_n} w_{\boldsymbol{\zeta}\boldsymbol{\eta}} \right)
\operatorname{Re} 
\left[ \widehat{\u}_{\boldsymbol{\eta}}^* (\widehat A\widehat{\u})_{\boldsymbol{\eta}} \right] }
{ \sum_{\boldsymbol{\eta}\in\Omega_n} \left( \sum_{\boldsymbol{\zeta}\in\Omega_n} w_{\boldsymbol{\zeta}\boldsymbol{\eta}} \right)
\widehat{\u}_{\boldsymbol{\eta}}^* (\widehat B\widehat{\u}_{\mathrm{o}})_{\boldsymbol{\eta}} }.
\end{equation*}
Using \eqref{eq:partition_unity_weight} gives
\begin{equation*}
\widehat{\lambda}_{\mathrm{LWRQ}}
=\frac{\operatorname{Re}
(\widehat{\u}_{\mathrm{o}}^*\widehat A\widehat{\u})}
{\widehat{\u}_{\mathrm{o}}^*\widehat B
\widehat{\u}_{\mathrm{o}}}
=R_{\Omega_n}(\widehat{\u}).
\end{equation*}
This proves the identity.
\end{proof}

\begin{thm}
\label{thm:weighted_local_RQ_smoothness}
Let $\boldsymbol{\zeta},\boldsymbol{\zeta}^{\prime}\in\Omega_n$ and define
\begin{equation}
\label{eq:weight_variation_measure}
\omega_w \left( \boldsymbol{\zeta}, \boldsymbol{\zeta}^{\prime} \right) 
=
\sum_{\boldsymbol{\eta}\in\Omega_n}
\left|
w_{\boldsymbol{\zeta}^{\prime}\boldsymbol{\eta}} - w_{\boldsymbol{\zeta}\boldsymbol{\eta}}
\right|.
\end{equation}
Suppose that
$$
\begin{aligned}
\left|
\operatorname{Re}\!\left[
\widehat{\u}_{\boldsymbol{\eta}}^{*}
(\widehat A\widehat{\u})_{\boldsymbol{\eta}}
\right]
\right|12
&\leq C_a,
&
\widehat{\u}_{\boldsymbol{\eta}}^{*}
\widehat B_{\boldsymbol{\eta}}
\widehat{\u}_{\boldsymbol{\eta}}
&\leq C_b,
\qquad
\boldsymbol{\eta}\in\Omega_n,
\sup_{\boldsymbol{\xi}\in\Omega_n}
&
\sum_{\boldsymbol{\eta}\in\Omega_n}
 w_{\boldsymbol{\xi}\boldsymbol{\eta}} \leq C_0.
\end{aligned}
$$
If
\begin{equation}
\label{eq:local_mass_positive_lower_bound}
M_{\boldsymbol{\zeta}}\geq c>0,
\qquad
M_{\boldsymbol{\zeta}^{\prime}}\geq c>0,
\end{equation}
then
\begin{equation}
\label{eq:weighted_local_RQ_smoothness_bound}
\left|
\rho_{\boldsymbol{\zeta}^{\prime}}
-
\rho_{\boldsymbol{\zeta}}
\right|
\leq
C\,
\omega_w
\left(
\boldsymbol{\zeta},
\boldsymbol{\zeta}^{\prime}
\right),
\end{equation}
where $C>0$ depends only on $C_a$, $C_b$, $C_0$, and $c$. Consequently, if
$\omega_w(\boldsymbol{\zeta},\boldsymbol{\zeta}^{\prime})\leq C_wh$,
then
$|\rho_{\boldsymbol{\zeta}^{\prime}}-\rho_{\boldsymbol{\zeta}}|\leq Ch$.
\end{thm}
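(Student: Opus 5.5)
The plan is the standard quotient-difference estimate. Abbreviate the pointwise densities as
\[
e_{\boldsymbol\eta}=\operatorname{Re}\bigl[\widehat{\u}_{\boldsymbol\eta}^*(\widehat A\widehat{\u})_{\boldsymbol\eta}\bigr],
\qquad
m_{\boldsymbol\eta}=\widehat{\u}_{\boldsymbol\eta}^*\widehat B_{\boldsymbol\eta}\widehat{\u}_{\boldsymbol\eta}\ge 0,
\]
so that $E_{\boldsymbol\zeta}=\sum_{\boldsymbol\eta} w_{\boldsymbol\zeta\boldsymbol\eta}e_{\boldsymbol\eta}$ and $M_{\boldsymbol\zeta}=\sum_{\boldsymbol\eta} w_{\boldsymbol\zeta\boldsymbol\eta}m_{\boldsymbol\eta}$ by \eqref{eq:weighted_energy_mass_density}. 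We then write
\[
\rho_{\boldsymbol\zeta'}-\rho_{\boldsymbol\zeta}
=\frac{E_{\boldsymbol\zeta'}-E_{\boldsymbol\zeta}}{M_{\boldsymbol\zeta'}}
+E_{\boldsymbol\zeta}\,\frac{M_{\boldsymbol\zeta}-M_{\boldsymbol\zeta'}}{M_{\boldsymbol\zeta}M_{\boldsymbol\zeta'}}.
\]
This splits the difference into a variation-of-energy term and a variation-of-mass term.

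\emph{Step 1: differences.} Since the weights enter linearly, the triangle inequality with the pointwise bounds $|e_{\boldsymbol\eta}|\le C_a$ and $0\le m_{\boldsymbol\eta}\le C_b$ gives
\[
|E_{\boldsymbol\zeta'}-E_{\boldsymbol\zeta}|\le C_a\,\omega_w(\boldsymbol\zeta,\boldsymbol\zeta'),
\qquad
|M_{\boldsymbol\zeta'}-M_{\boldsymbol\zeta}|\le C_b\,\omega_w(\boldsymbol\zeta,\boldsymbol\zeta').
\]

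\emph{Step 2: the prefactor $E_{\boldsymbol\zeta}$.} Nonnegativity of the weights and the row-sum bound $\sup_{\boldsymbol\xi}\sum_{\boldsymbol\eta}w_{\boldsymbol\xi\boldsymbol\eta}\le C_0$ give $|E_{\boldsymbol\zeta}|\le C_aC_0$.

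\emph{Step 3: denominators.} The assumption \eqref{eq:local_mass_positive_lower_bound} gives $1/M_{\boldsymbol\zeta'}\le 1/c$ and $1/(M_{\boldsymbol\zeta}M_{\boldsymbol\zeta'})\le 1/c^2$. Combining Steps 1--3 yields
\[
|\rho_{\boldsymbol\zeta'}-\rho_{\boldsymbol\zeta}|
\le\Bigl(\frac{C_a}{c}+\frac{C_aC_0C_b}{c^2}\Bigr)\omega_w(\boldsymbol\zeta,\boldsymbol\zeta').
\]
The constant depends only on $C_a,C_b,C_0,c$, which is \eqref{eq:weighted_local_RQ_smoothness_bound}. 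The final claim follows by substituting $\omega_w\le C_wh$, with $C_w$ absorbed into the constant.

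There is no genuine obstacle here. The one point that needs care is Step 2: $E_{\boldsymbol\zeta}$ must be bounded through the row-sum constant $C_0$. The partition-of-unity condition \eqref{eq:partition_unity_weight} only controls column sums $\sum_{\boldsymbol\zeta}w_{\boldsymbol\zeta\boldsymbol\eta}$, so it is not enough, and this is exactly why $C_0$ appears among the hypotheses. Step 2 also uses the nonnegativity of the weights.

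The same argument could alternatively absorb the first term through $M_{\boldsymbol\zeta}$ symmetrically. Either form gives a constant of the stated type.
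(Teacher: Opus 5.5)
Your proposal is correct and follows essentially the same argument as the paper: the paper uses the identical splitting of $\rho_{\boldsymbol\zeta'}-\rho_{\boldsymbol\zeta}$ (written over the common denominator $M_{\boldsymbol\zeta'}M_{\boldsymbol\zeta}$), the same bounds $|E_{\boldsymbol\zeta'}-E_{\boldsymbol\zeta}|\le C_a\,\omega_w(\boldsymbol\zeta,\boldsymbol\zeta')$, $|M_{\boldsymbol\zeta'}-M_{\boldsymbol\zeta}|\le C_b\,\omega_w(\boldsymbol\zeta,\boldsymbol\zeta')$ and $|E_{\boldsymbol\zeta}|\le C_aC_0$, and then the lower bound $c$ on the local masses. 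You also state explicitly three points the paper uses only tacitly: $m_{\boldsymbol\eta}\ge 0$ is needed, the weights must be nonnegative, and the partition of unity controls only column sums, which is why the row-sum constant $C_0$ is required.
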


\begin{proof}
Let $ a_{\boldsymbol{\eta}} =
\operatorname{Re}\!\left[
\widehat{\u}_{\boldsymbol{\eta}}^{*}
(\widehat A\widehat{\u})_{\boldsymbol{\eta}}
\right]$, $
b_{\boldsymbol{\eta}} =
\widehat{\u}_{\boldsymbol{\eta}}^{*}
\widehat B_{\boldsymbol{\eta}}
\widehat{\u}_{\boldsymbol{\eta}}$. Then $
E_{\boldsymbol{\zeta}} =
\sum_{\boldsymbol{\eta}\in\Omega_n}
w_{\boldsymbol{\zeta}\boldsymbol{\eta}}
a_{\boldsymbol{\eta}} $, $
M_{\boldsymbol{\zeta}} =
\sum_{\boldsymbol{\eta}\in\Omega_n}
w_{\boldsymbol{\zeta}\boldsymbol{\eta}}
b_{\boldsymbol{\eta}}$.  Hence,
$$
|E_{\boldsymbol{\zeta}^{\prime}}-E_{\boldsymbol{\zeta}}|
\leq
C_a\,
\omega_w(\boldsymbol{\zeta},\boldsymbol{\zeta}^{\prime}),
\qquad
|M_{\boldsymbol{\zeta}^{\prime}}-M_{\boldsymbol{\zeta}}|
\leq
C_b\,
\omega_w(\boldsymbol{\zeta},\boldsymbol{\zeta}^{\prime}).
$$
Moreover,
$|E_{\boldsymbol{\zeta}}|\leq C_aC_0$.
Using
$$
\rho_{\boldsymbol{\zeta}^{\prime}}
-
\rho_{\boldsymbol{\zeta}}
=
\frac{
(E_{\boldsymbol{\zeta}^{\prime}}-E_{\boldsymbol{\zeta}})
M_{\boldsymbol{\zeta}}
-
E_{\boldsymbol{\zeta}}
(M_{\boldsymbol{\zeta}^{\prime}}-M_{\boldsymbol{\zeta}})
}{
M_{\boldsymbol{\zeta}^{\prime}}
M_{\boldsymbol{\zeta}}
},
$$
together with \eqref{eq:local_mass_positive_lower_bound}, yields
\eqref{eq:weighted_local_RQ_smoothness_bound}.
\end{proof}

The following estimate relates the cropped Yee quotient to the residual of the reconstructed field.

\begin{thm}
\label{thm:error_bound_weighted_LRQ}
Under the assumptions of Theorem~\ref{thm:weighted_local_RQ_to_global_RQ}, assume that, for some $\vartheta\in\mathbb R$, the reconstructed field satisfies
\begin{equation}
\label{eq:residual_relation_weighted}
\widehat A\widehat{\u} =
\vartheta \widehat B\widehat{\u}_{\mathrm{o}} + \e.
\end{equation}
Suppose that $e_{\max} = \max_{\boldsymbol{\zeta}\in\Omega_n} \|\e_{\boldsymbol{\zeta}}\|_{\infty}$, $\sum_{\boldsymbol{\zeta}\in\Omega_n} \|\widehat{\u}_{\boldsymbol{\zeta}}\|_1 \leq C_u$, and $\widehat{\u}_{\mathrm{o}}^{*} \widehat B\widehat{\u}_{\mathrm{o}} \geq c_B>0$. Then
\begin{equation}
\label{eq:error_expectation_bound}
\left| \mathbb E_{\mathrm p}
\left[
\left\{ \rho_{\boldsymbol{\zeta}} \right\}_{\boldsymbol{\zeta}\in\Omega_n}
\right] - \vartheta
\right|
\leq \frac{C_u}{c_B}e_{\max}.
\end{equation}
In particular, for $\vartheta=\widetilde{\lambda}$, $ \left|
\widehat{\lambda}_{\mathrm{LWRQ}} - \widetilde{\lambda}
\right| \leq
\frac{C_u}{c_B}e_{\max}$. 
\end{thm}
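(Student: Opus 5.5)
The plan is to reduce everything to the global cropped quotient using Theorem~\ref{thm:weighted_local_RQ_to_global_RQ}, and then perturb that quotient using the residual relation \eqref{eq:residual_relation_weighted}.

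By Theorem~\ref{thm:weighted_local_RQ_to_global_RQ}, the partition-of-unity weights together with $M_{\boldsymbol\zeta}>0$ give
\[
\mathbb E_{\mathrm p}\bigl[\{\rho_{\boldsymbol\zeta}\}\bigr]
=\frac{\operatorname{Re}(\widehat{\u}_{\mathrm o}^*\widehat A\widehat{\u})}{\widehat{\u}_{\mathrm o}^*\widehat B\widehat{\u}_{\mathrm o}}.
\]
So it suffices to bound this ratio minus $\vartheta$.

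Next, multiply \eqref{eq:residual_relation_weighted} on the left by $\widehat{\u}_{\mathrm o}^*$. This gives
\[
\widehat{\u}_{\mathrm o}^*\widehat A\widehat{\u}
=\vartheta\,\widehat{\u}_{\mathrm o}^*\widehat B\widehat{\u}_{\mathrm o}+\widehat{\u}_{\mathrm o}^*\e.
\]
We then take real parts. Since $\vartheta$ is real and $\widehat B$ is a positive diagonal matrix, the term $\vartheta\,\widehat{\u}_{\mathrm o}^*\widehat B\widehat{\u}_{\mathrm o}$ is already real. Dividing by the mass $\widehat{\u}_{\mathrm o}^*\widehat B\widehat{\u}_{\mathrm o}$ yields
\[
\mathbb E_{\mathrm p}\bigl[\{\rho_{\boldsymbol\zeta}\}\bigr]-\vartheta
=\frac{\operatorname{Re}(\widehat{\u}_{\mathrm o}^*\e)}{\widehat{\u}_{\mathrm o}^*\widehat B\widehat{\u}_{\mathrm o}}.
\]

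The remaining step is to bound the numerator blockwise with a Hölder ($\ell^1$–$\ell^\infty$) pairing on each $3$-vector block:
\[
|\widehat{\u}_{\mathrm o}^*\e|
\le\sum_{\boldsymbol\zeta\in\Omega_n}|\widehat{\u}_{\boldsymbol\zeta}^*\e_{\boldsymbol\zeta}|
\le\sum_{\boldsymbol\zeta}\|\widehat{\u}_{\boldsymbol\zeta}\|_1\|\e_{\boldsymbol\zeta}\|_\infty
\le e_{\max}C_u.
\]
Combining this with the lower bound $c_B$ on the denominator gives \eqref{eq:error_expectation_bound}.

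The statement for $\vartheta=\widetilde\lambda$ is a special case, with $\e$ defined as the residual of the reconstructed field against the Fourier eigenvalue. The only point needing care is the bookkeeping of the cropping: $\e$ is indexed by $\Omega_n$, because $\widehat A$ maps $\Omega_n^+$ values to $\Omega_n$, so the inner product $\widehat{\u}_{\mathrm o}^*\e$ is well defined. Once that is checked, I expect no real obstacle.
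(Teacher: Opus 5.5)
Your proposal is correct and follows essentially the same route as the paper's proof: reduce to the cropped quotient via Theorem~\ref{thm:weighted_local_RQ_to_global_RQ}, use the residual relation to write the difference as $\operatorname{Re}(\widehat{\u}_{\mathrm{o}}^*\e)/(\widehat{\u}_{\mathrm{o}}^*\widehat B\widehat{\u}_{\mathrm{o}})$, and bound the numerator blockwise by the $\ell^1$--$\ell^\infty$ pairing. You additionally make explicit that $\vartheta\,\widehat{\u}_{\mathrm{o}}^*\widehat B\widehat{\u}_{\mathrm{o}}$ is real and that $\e$ is indexed by $\Omega_n$; the paper uses both facts without stating them.
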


\begin{proof}
By Theorem~\ref{thm:weighted_local_RQ_to_global_RQ} and \eqref{eq:residual_relation_weighted},
\begin{align*}
\mathbb E_{\mathrm p}
\left[
\left\{ \rho_{\boldsymbol{\zeta}} \right\}_{\boldsymbol{\zeta}\in\Omega_n}
\right] -\vartheta =
R_{\Omega_n}(\widehat{\u})-\vartheta =
\frac{ \operatorname{Re} (\widehat{\u}_{\mathrm{o}}^*\e) }
{ \widehat{\u}_{\mathrm{o}}^* \widehat B\widehat{\u}_{\mathrm{o}} }.
\end{align*}
Therefore,
\begin{equation*}
\left|\mathbb E_{\mathrm p}
\left[\left\{\rho_{\boldsymbol\zeta}\right\}\right]
-\vartheta\right|
\leq
\frac{\sum_{\boldsymbol\zeta\in\Omega_n}
\|\widehat{\u}_{\boldsymbol\zeta}\|_1
\|\e_{\boldsymbol\zeta}\|_\infty}{c_B}
\leq\frac{C_u}{c_B}e_{\max}.
\end{equation*}
This proves \eqref{eq:error_expectation_bound}.
\end{proof}

\begin{remark}
\label{remark:Taylor_error_in_residual}
Let $\delta\widehat{\u} = \widehat{\u}_{T} - \widehat{\u}_{\mathrm{dir}}$ denote the Taylor reconstruction error. If the directly reconstructed field satisfies
\begin{equation}
\widehat A\widehat{\u}_{\mathrm{dir}} =
\vartheta\widehat B\widehat{\u}_{\mathrm{dir,o}} + \e_{\mathrm{disc}},
\end{equation}
then the residual of the Taylor-reconstructed field can be decomposed as
\begin{equation}
\e = \e_{\mathrm{disc}} + \e_{\mathrm{rec}},
\qquad
\e_{\mathrm{rec}} =
\widehat A\delta\widehat{\u} - \vartheta\widehat B\delta\widehat{\u}_{\mathrm{o}}.
\end{equation}
Consequently,
\begin{equation}
e_{\max} \leq
e_{\mathrm{disc,max}} + e_{\mathrm{rec,max}},
\qquad
e_{\mathrm{rec,max}} \leq
\left( \|\widehat A\|_{\infty} + |\vartheta|\|\widehat B\|_{\infty} \right)
\|\delta\widehat{\u}\|_{\infty}.
\end{equation}
Thus, the reconstruction contribution is controlled by \eqref{eq:Taylor_Yee_reconstruction_error}.
\end{remark}

For clarity, the physical-space reconstruction and LWRQ postprocessing developed above are summarized in Algorithm~\ref{alg:physical_reconstruction_LWRQ}.

\begin{algorithm}[htbp]
\caption{Physical-space reconstruction and LWRQ recovery}
\label{alg:physical_reconstruction_LWRQ}
\begin{algorithmic}[1]

\Require Fourier eigenvector $\U$, projection matrix $P$, Bloch vector $\k$, and retained index set $\mathcal J_v$; Yee spacing $h$ and half-width $n$; Taylor order $p$, partition $\{\Omega_s\}_{s=1}^S$ of $\Omega_n^+$ and associated componentwise centers; lifted permittivity $\mathcal E$, neighborhoods $N(\boldsymbol\zeta)$, and Gaussian width $\sigma$.

\Ensure Reconstructed Yee field $\widehat{\u}$,
cropped Rayleigh quotient $R_{\Omega_n}(\widehat{\u})$,
local quotients $\rho_{\boldsymbol\zeta}$, and
LWRQ approximation $\widehat{\lambda}_{\mathrm{LWRQ}}$.

\State Form
$\q_{\boldsymbol\xi}=P^{\mathsf T}(\k+\boldsymbol\xi)$,
$\boldsymbol\xi\in\mathcal J_v$, and generate the staggered Yee points
on $\Omega_n^+$.

\State Compute the local Fourier moments
$\gamma_{\boldsymbol\alpha,s}^{(\ell)}$,
$|\boldsymbol\alpha|\leq p$, from
\eqref{eq:Fourier_moment_Taylor_reconstruction}, and reconstruct
$\widehat{\u}$ by \eqref{eq:Taylor_Yee_field_reconstruction}.

\State Restrict $\widehat{\u}$ to $\widehat{\u}_{\mathrm{o}}$ on
$\Omega_n$, apply the cropped Yee operators, and compute
$$
R_{\Omega_n}(\widehat{\u})
=
\frac{\operatorname{Re}
\bigl(\widehat{\u}_{\mathrm{o}}^*\widehat A\widehat{\u}\bigr)}
{\widehat{\u}_{\mathrm{o}}^*\widehat B\widehat{\u}_{\mathrm{o}}}.
$$

\State Form the weights using \eqref{eq:partition_unity_weight}
and compute $E_{\boldsymbol\zeta}$ and
$M_{\boldsymbol\zeta}$ from
\eqref{eq:weighted_energy_mass_density}.
Under the standing positive-mass assumption, compute
$\rho_{\boldsymbol\zeta}$ from
\eqref{eq:weighted_local_RQ_def}.

\State Set
$p_{\boldsymbol\zeta} = M_{\boldsymbol\zeta}/\sum_{\boldsymbol\eta}M_{\boldsymbol\eta}$
and compute $ \widehat{\lambda}_{\mathrm{LWRQ}} = \sum_{\boldsymbol\zeta} p_{\boldsymbol\zeta}\rho_{\boldsymbol\zeta}$
according to \eqref{eq:weighted_expectation_local_RQ}.

\end{algorithmic}
\end{algorithm}

The numerical experiments below examine the reconstruction accuracy and the resulting 3D Yee residual and spectral estimates.

\section{Numerical experiments}
\label{sec:numerical_experiments}

In this section, we validate the null-space free reduction and inverse Lanczos solver, the orthogonalized inner system, the memory-efficient Yee-field reconstruction, and the cropped Yee and LWRQ consistency measures, and then apply the complete framework to a representative 3D quasiperiodic Maxwell problem. Unless otherwise stated, all computations are implemented in MATLAB R2026a using double-precision arithmetic with $\mu(\r)=I_3$, and the iterative eigensolvers target the smallest positive eigenvalues. The main computations are performed in double precision on an NVIDIA Tesla V100-SXM2 GPU with 32~GB of device memory.

\subsection{Experiment 1: Large-scale comparison of original and reduced inverse realizations}
\label{subsec:exp_constant_medium}

We first compare three realizations for computing the smallest positive eigenvalues of the quasiperiodic Maxwell problem: the original shift-and-invert realization (OSI) for the original GEVP, the nested reduced inverse realization (NRI) for the null-space free problem, and the explicit reduced inverse realization (ERI) based on \eqref{eq:Kr_inverse_orthogonalized}. We set $T_\ell=2\pi$, $\ell=1,\ldots,6$,
\begin{equation}
P=
\begin{pmatrix}
1 & 0 & 0 & \sqrt{2} & 0 & 0\\
0 & 1 & 0 & 0 & \sqrt{3} & 0\\
0 & 0 & 1 & 0 & 0 & \sqrt{5}
\end{pmatrix}^{\mathsf T},
\label{eq:exp1_projection_matrix}
\end{equation}
and use the smooth quasiperiodic permittivity $\mathcal E(\x)=4\exp\!\left(\frac{1.2}{6}\sum_{\ell=1}^{6}\cos x_\ell\right)I_3$. For $N = 5$, $N_F=(2N)^6=10^6$, so that the original GEVP has dimension $3\times10^6$, whereas the null-space free problem \eqref{eq:null_space_free_EVP} has dimension $2\times10^6$.

For the fixed-size comparison in Figure~\ref{fig:exp1_outer_iterations}, we use the closed physical Bloch path
$$
\q^{(0)}=(0.05,0.04,0.03)^{\mathsf T},
\quad
\q^{(1)}=(0.25,0.04,0.03)^{\mathsf T},
\quad
\q^{(2)}=(0.25,0.20,0.03)^{\mathsf T},
$$
$$
\q^{(3)}=(0.25,0.20,0.18)^{\mathsf T},
\qquad
\q^{(4)}=\q^{(0)}.
$$
Each segment is divided into three equal subintervals, giving $12$ independent Bloch-wave-vector problems, and the smallest ten positive eigenvalues are computed at every point. To examine the dependence on the discretization size, we fix $\q=\q^{(0)}$ in Figures~\ref{fig:exp1_linear_iterations} and~\ref{fig:exp1_walltime} and vary $N=3,\ldots,7$.

OSI applies the original shift-and-invert formulation with shift $\sigma=10^{-4}$ through the Arnoldi-based MATLAB \texttt{eigs}, with the shifted systems solved by preconditioned MINRES to relative tolerance $10^{-10}$. NRI and ERI apply inverse Lanczos to the null-space free problem, using the nested realization of $K_r^{-1}$ and the explicit reduced inverse \eqref{eq:Kr_inverse_orthogonalized}, respectively. The CG tolerances are $10^{-10}$ and $10^{-12}$ for the outer and inner systems in NRI, and $10^{-10}$ for the $\widehat M$ system in ERI. The reduced inverse-Lanczos computations use Krylov dimension $m_{\max}=40$ and tolerance $\tau_{\mathrm{Lan}}=10^{-10}$. 

To examine the dependence of the three realizations on the Fourier truncation size, we terminate the computation for a given method if the accumulated number of iterative linear-solver steps reaches $10^6$ or if the elapsed solver time reaches $10^4$ s, and no larger value of $N$ is considered thereafter for that method. The reported wall time includes the method-dependent setup and eigensolution.

\begin{figure}[htbp]
\centering
\captionsetup{skip=3pt}
\captionsetup[subfigure]{skip=2pt}
\begin{subfigure}[t]{0.315\textwidth}
\centering
\includegraphics[width=\textwidth]
{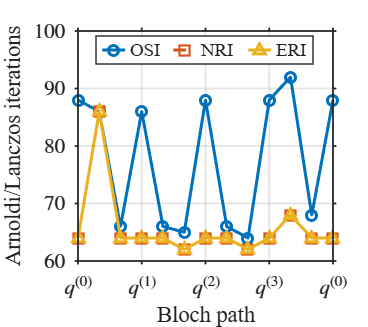}
\caption{Arnoldi/Lanczos iterations on the Bloch path.}
\label{fig:exp1_outer_iterations}
\end{subfigure}
\hfill
\begin{subfigure}[t]{0.315\textwidth}
\centering
\includegraphics[width=\textwidth]
{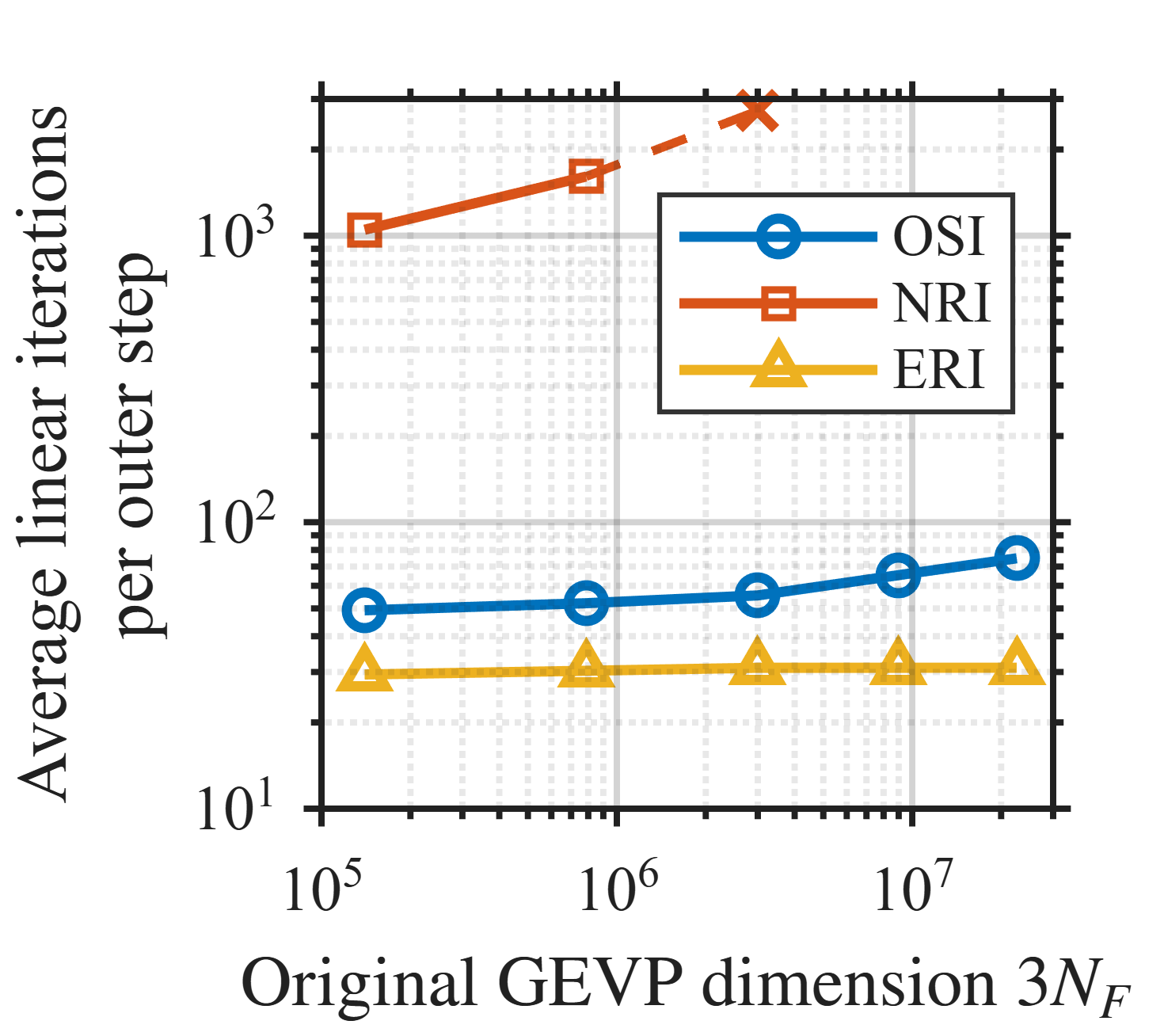}
\caption{Average linear iterations per outer step at $\q=\q^{(0)}$.}
\label{fig:exp1_linear_iterations}
\end{subfigure}
\hfill
\begin{subfigure}[t]{0.315\textwidth}
\centering
\includegraphics[width=\textwidth]
{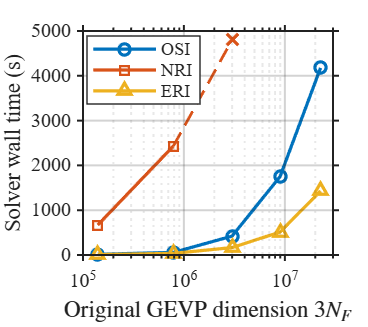}
\caption{Solver wall time versus $3N_F$ at $\q=\q^{(0)}$.}
\label{fig:exp1_walltime}
\end{subfigure}
\caption{Comparison of OSI, NRI, and ERI, with panel (a) evaluated for $N=5$. Panel (a) compares the Arnoldi and Lanczos iterations, while panels (b) and (c) show the average linear iterations per outer step and the solver wall time, respectively, as functions of the original GEVP dimension $3N_F$. The dashed NRI continuation and cross indicate the first incomplete computation; the vertical position of the cross is only illustrative.}
\label{fig:exp1_inverse_comparison}
\end{figure}

The three realizations recover the same positive eigenvalues to numerical precision. Figure~\ref{fig:exp1_outer_iterations} shows that NRI and ERI require exactly the same number of outer Lanczos steps at every Bloch wave vector, with mean counts $65.83$ for both, compared with $76.92$ Arnoldi steps for OSI. This agreement reflects the fact that NRI and ERI solve the same null-space free eigenproblem and differ only in the realization of $K_r^{-1}$. Hence the explicit representation \eqref{eq:Kr_inverse_orthogonalized} leaves the outer spectral iteration unchanged.

The distinction between the two reduced realizations appears in the inner linear solves. As shown in Figure~\ref{fig:exp1_linear_iterations}, the average number of linear iterations per outer step for ERI varies only mildly with the problem dimension, consistent with the conditioning estimate for the orthogonalized inner system, whereas NRI incurs substantially greater inner work because of its nested solves. At $N=5$, NRI reaches the prescribed $10^4$s time limit before completion and is therefore not continued to larger dimensions. Figure~\ref{fig:exp1_walltime} further shows that ERI remains less expensive than OSI as the dimension increases. Thus, the null-space free reduction determines the reduced eigenproblem, whereas the explicit inverse \eqref{eq:Kr_inverse_orthogonalized} removes the nested linear solves and provides the computational advantage observed here.

\subsection{Experiment 2: 3D Yee-field reconstruction}
\label{subsec:exp_Yee_reconstruction}

Using the periods, projection matrix, and permittivity of Subsection~\ref{subsec:exp_constant_medium}, we examine the accuracy, cost, and scalability of \eqref{eq:Taylor_Yee_field_reconstruction}. We compute the first ten positive eigenmodes of the $N = 5$ discretization at $\q=(0.13,0.21,0.17)^{\mathsf T}$ and use the tenth positive mode as the representative field, with $N_F=10^6$. For the accuracy-cost study, we set $h=0.0125$ and $n=40$, so that $\Omega_n=\{-40,\ldots,40\}^3$ and the enlarged Yee grid contains $N_g=(2n+3)^3=571787$ sampling indices for each staggered field component.

The reference field $\widehat{\u}_{\mathrm{dir}}$ is computed by blockwise direct evaluation of \eqref{eq:direct_Yee_field_reconstruction} with batch size $256$. For the multi-center Taylor reconstruction, we consider $p=6,8,10$ and $S=m^3$ local blocks with $m=1,\ldots,6$. Each coordinate index set is divided into $m$ contiguous, nearly equal-sized groups; each staggered component uses the midpoint of its coordinate ranges in each block as the expansion center. Increasing $m$ reduces the local phase radii but requires forming moments at more centers. For a Taylor-reconstructed field $\widehat{\u}_p$, we measure
$$
e_2(p)=
\frac{\|\widehat{\u}_p-\widehat{\u}_{\mathrm{dir}}\|_2}
{\|\widehat{\u}_{\mathrm{dir}}\|_2},
\qquad
e_A(p)=
\frac{\|\widehat A\widehat{\u}_p-\widehat A\widehat{\u}_{\mathrm{dir}}\|_2}
{\|\widehat A\widehat{\u}_{\mathrm{dir}}\|_2}.
$$
The configuration $p=10$ with $64=4^3$ local centers is adopted. For an $m^3$ partition, the maximum phase radius is $r_{\mathrm{ph},\max}=(\ell_{\max}/2)\max_{\boldsymbol\xi\in\mathcal J_v}\|\q_{\boldsymbol\xi}\|_1$, where $\ell_{\max}=h(\lceil(2n+3)/m\rceil-1)$ is the largest local block width. For $n=40$ and $m=4$, we have $\ell_{\max}=0.25$ and $\max_{\boldsymbol\xi\in\mathcal J_v}\|\q_{\boldsymbol\xi}\|_1\approx41.40$, giving $r_{\mathrm{ph},\max}\approx5.18$.

We next vary the window half-width index $n=40,60,\ldots,200$ at fixed $N=5$ and $h=0.0125$. Figure~\ref{fig:taylor_ng_scaling} uses $n^3$ on the horizontal axis; the enlarged grid contains $N_g=(2n+3)^3$ points per field component. The Taylor method uses $p=10$ and $S=m^3$ centers, with the smallest integer $m$ satisfying $r_{\mathrm{ph},\max}\leq5.18$. Its timings include moment formation at all centers and polynomial evaluation. Each method has a $10^4$~s time budget; the cross marks the first direct reconstruction exceeding it. Relative field errors are reported only when both methods complete.

\begin{figure}[t]
\centering
\captionsetup{skip=3pt}
\captionsetup[subfigure]{skip=2pt}

\begin{subfigure}[t]{0.48\textwidth}
\vspace{0pt}
\centering
\includegraphics[
    width=\linewidth,
    trim={0 4mm 0 0},
    clip
]
{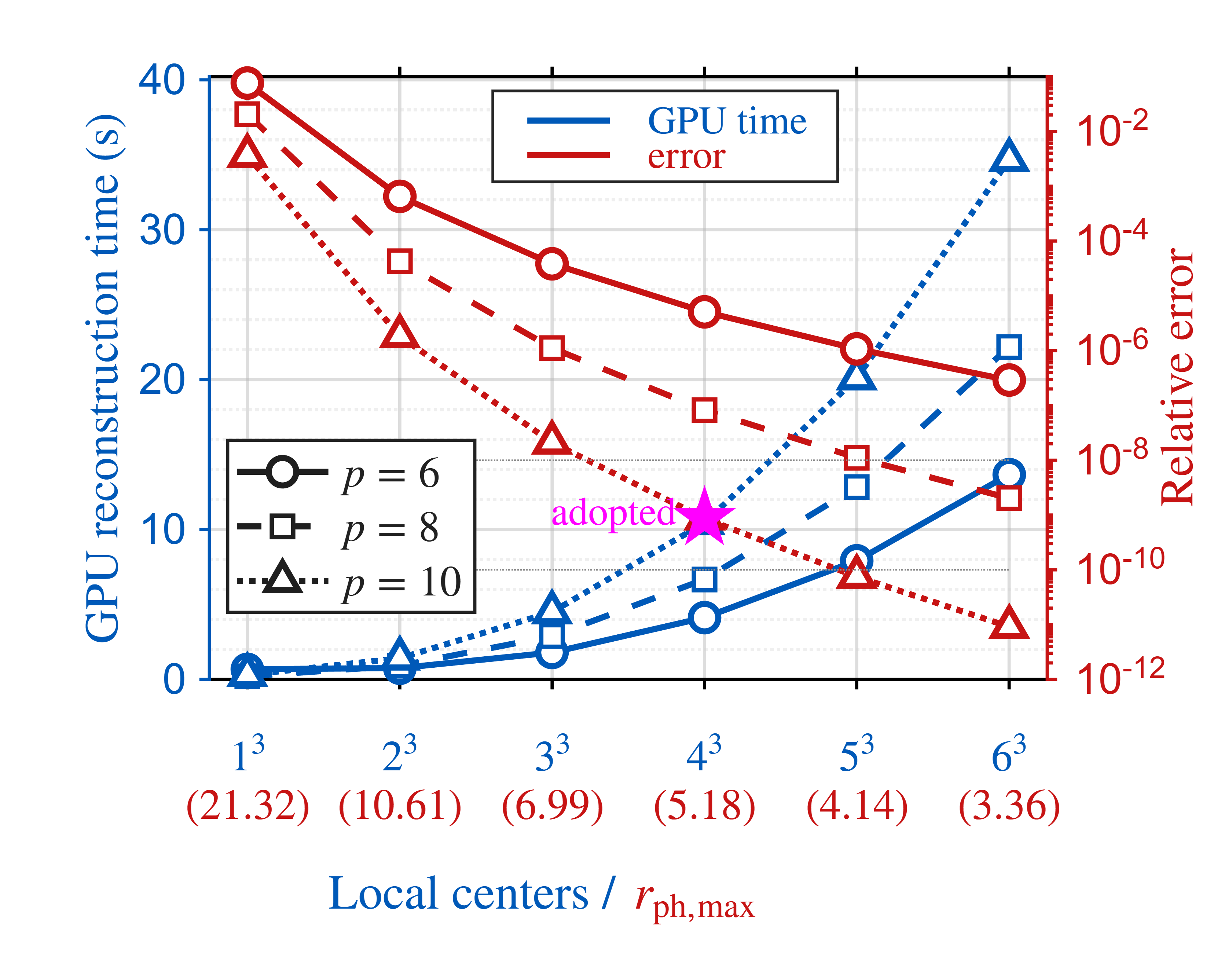}
\caption{Accuracy-cost trade-off of the Taylor reconstruction for $p=6,8,10$ under progressively refined local partitions.}
\label{fig:taylor_order_center_tradeoff}
\end{subfigure}
\hfill
\begin{subfigure}[t]{0.48\textwidth}
\vspace{0pt}
\centering
\includegraphics[
    width=\linewidth,
    trim={0 4mm 0 0},
    clip
]
{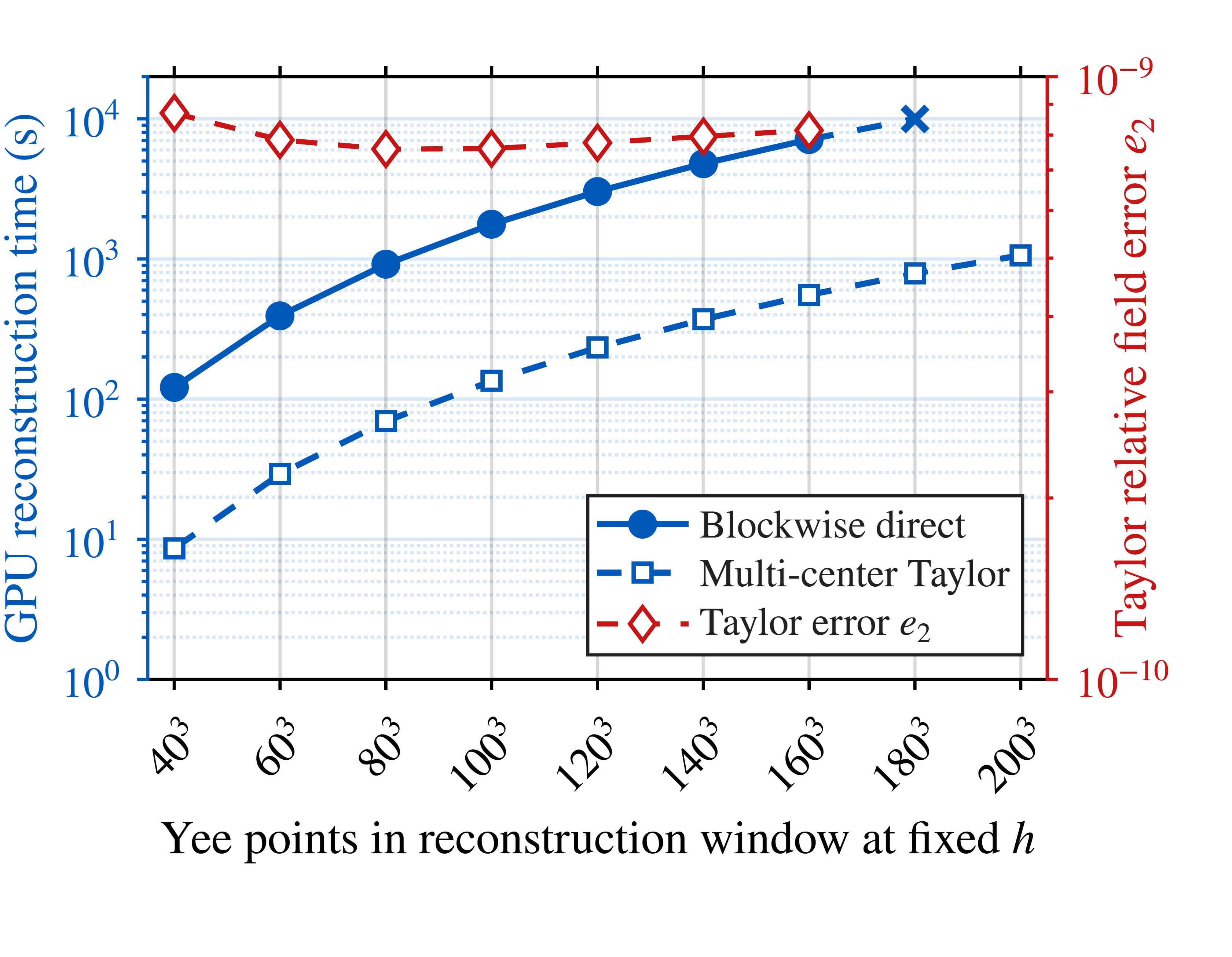}
\caption{Reconstruction scalability of both methods with increasing window size at fixed Yee spacing.}
\label{fig:taylor_ng_scaling}
\end{subfigure}

\caption{Accuracy and scalability of the multi-center Taylor reconstruction. Panel (a) examines the effects of the Taylor order and local partition on reconstruction accuracy and cost, while panel (b) compares the reconstruction cost of the blockwise direct and multi-center Taylor formulations as the physical sampling window, and hence the number of Yee points, increases at fixed Yee spacing.}
\label{fig:taylor_reconstruction_performance}
\end{figure}

Figure~\ref{fig:taylor_order_center_tradeoff} confirms the role of the local phase radius in the truncated reconstruction. For each Taylor order, localization reduces $r_{\mathrm{ph},\max}$ and the corresponding field error, while increasing $p$ improves the accuracy at a fixed partition. The adopted $p=10$, $64$-center setting gives $e_2=8.69\times10^{-10}$ and $e_A=1.38\times10^{-5}$. Thus, the phase-radius criterion motivated by \eqref{eq:Taylor_phase_radius}-\eqref{eq:Taylor_Yee_reconstruction_error} provides an effective parameter for controlling the truncation error of the separated representation.

Figure~\ref{fig:taylor_ng_scaling} shows that the field error remains at a similar level as the sampling window grows under the phase-radius criterion. At the largest size completed by both methods, the Taylor reconstruction is approximately $12.9$ times faster; the direct reconstruction exceeds the $10^4$~s budget at the next size. These results demonstrate the computational advantage of the tested Taylor implementation at fixed $N$ and $h$, with the number of centers increased as required.

\subsection{Experiment 3: Physical-space consistency and LWRQ validation}
\label{subsec:exp_LWRQ_validation}
We use the same medium and physical Bloch vector as in Subsection~\ref{subsec:exp_Yee_reconstruction}. Since the ordered mode index need not remain fixed as the Fourier truncation is enlarged, we use Fourier-overlap tracking to select two persistent modes across the truncations. The first and tenth positive modes at $N=2$ are associated with modes $11$ and $114$, respectively, at $N=5$, with Fourier overlaps $0.9908$ and $0.9484$. We therefore use modes $11$ and $114$ throughout this experiment. For $N=5$, we have $N_F=10^6$. The multi-center Taylor reconstruction uses $p=10$, with the local centers chosen so that $r_{\mathrm{ph},\max}\leq5.2$.

The Gaussian weights use $\sigma=2h$ and $N(\boldsymbol\zeta)
=\{\boldsymbol\eta\in\Omega_n:
\|\boldsymbol\eta-\boldsymbol\zeta\|_\infty\leq3\}$, with the normalization in \eqref{eq:partition_unity_weight}. For each reconstructed Yee field $\widehat{\u}$, we first compute its cropped Yee quotient,
$$
\lambda_{\mathrm{3D}}
=
R_{\Omega_n}(\widehat{\u})
=
\widehat{\lambda}_{\mathrm{LWRQ}},
$$
where the equality follows from Theorem~\ref{thm:weighted_local_RQ_to_global_RQ}. For $\lambda_{\mathrm{3D}}\neq0$, the relative consistency indicators used below are defined by
$$
\eta_{\mathrm{3D}} =
\frac{
\|\widehat A\widehat{\u}
-\lambda_{\mathrm{3D}}\widehat B\widehat{\u}_{\mathrm{o}}\|_2
}{
\|\widehat A\widehat{\u}\|_2
+
|\lambda_{\mathrm{3D}}|
\|\widehat B\widehat{\u}_{\mathrm{o}}\|_2
},
\quad
\nu_{\mathrm{LWRQ}}
=
\frac{
\left(
\sum_{\boldsymbol{\zeta}}
p_{\boldsymbol{\zeta}}
|\rho_{\boldsymbol{\zeta}}-\lambda_{\mathrm{3D}}|^2
\right)^{1/2}
}{
|\lambda_{\mathrm{3D}}|
},\quad
\delta_{\lambda}
=
\frac{ |\lambda_{\mathrm{3D}}-\widetilde{\lambda}| }
{ |\widetilde{\lambda}| }.
$$
Here $\eta_{\mathrm{3D}}$ measures the residual of the reconstructed field in the independently discretized 3D Yee system, while $\nu_{\mathrm{LWRQ}}$ quantifies the spread of the local Rayleigh quotients about the recovered value $\lambda_{\mathrm{3D}}$. The quantity $\delta_{\lambda}$ measures the relative discrepancy between this 3D spectral value and the 6D Fourier eigenvalue $\widetilde{\lambda}$.

We first fix the physical window $[-0.5,0.5]^3$ and use
$$
(h,n)
=
(0.1,5),\
(0.05,10),\
(0.025,20),\
(0.0125,40).
$$
For each mesh, Figure~\ref{fig:LWRQ_physical_sensitivity} reports the maximum values of $\eta_{\mathrm{3D}}$, $\nu_{\mathrm{LWRQ}}$, and $\delta_{\lambda}$ over modes $11$ and $114$. Window sensitivity is examined separately at $h=0.025$ over physical half-widths ranging from $L=0.25$ to $L=4$. For each tracked mode, the field is reconstructed once on the largest window $[-4,4]^3$, and the smaller windows are obtained by centered cropping. The two insets display the representative range $L=1,2,3,4$ separately for modes $11$ and $114$.

\begin{figure}[htbp]
\centering

\begin{subfigure}[t]{0.48\textwidth}
\centering
\includegraphics[width=\linewidth]
{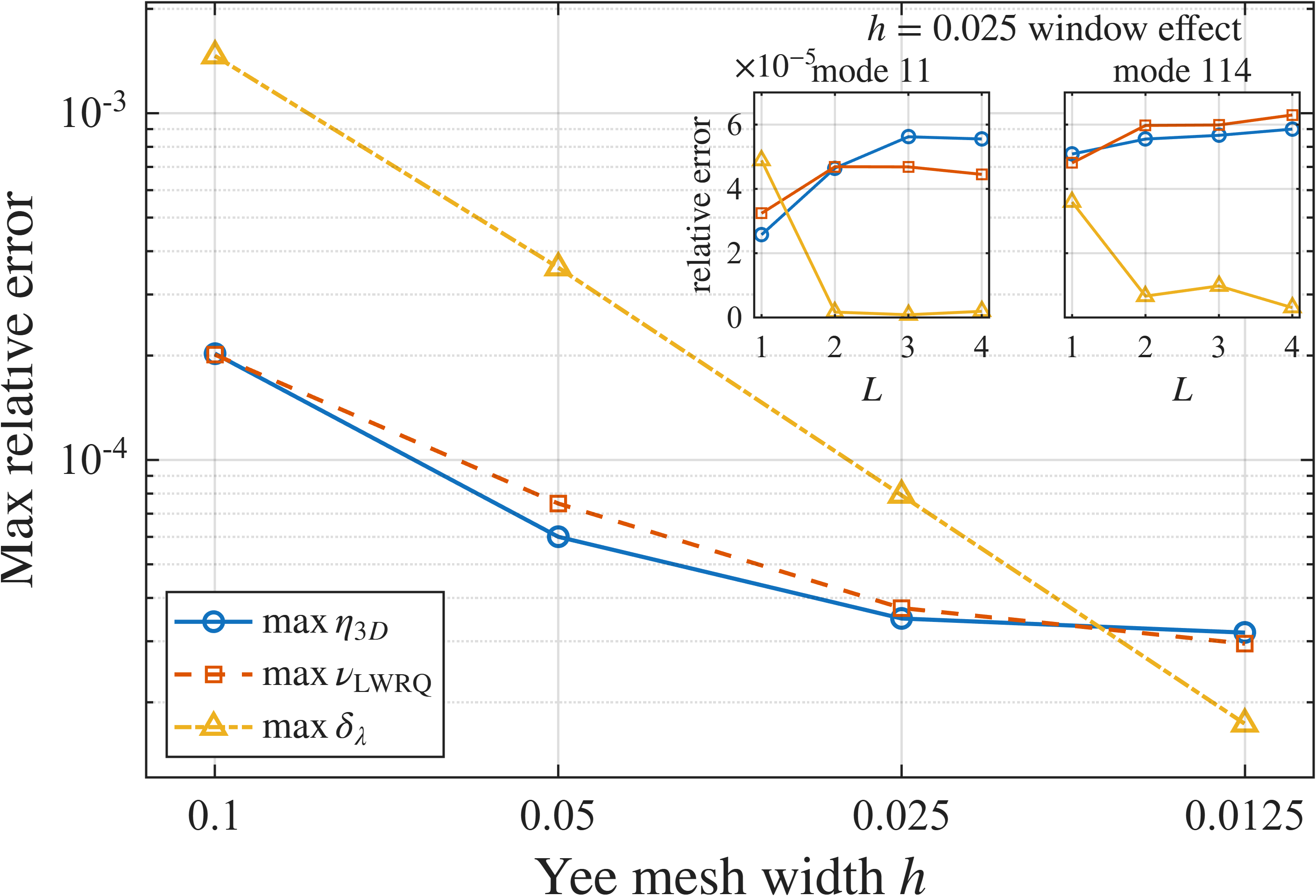}
\caption{Maximum relative consistency indicators under Yee-grid refinement, with window effects shown in the insets.}
\label{fig:LWRQ_physical_sensitivity}
\end{subfigure}
\hfill
\begin{subfigure}[t]{0.48\textwidth}
\centering
\includegraphics[width=\linewidth]
{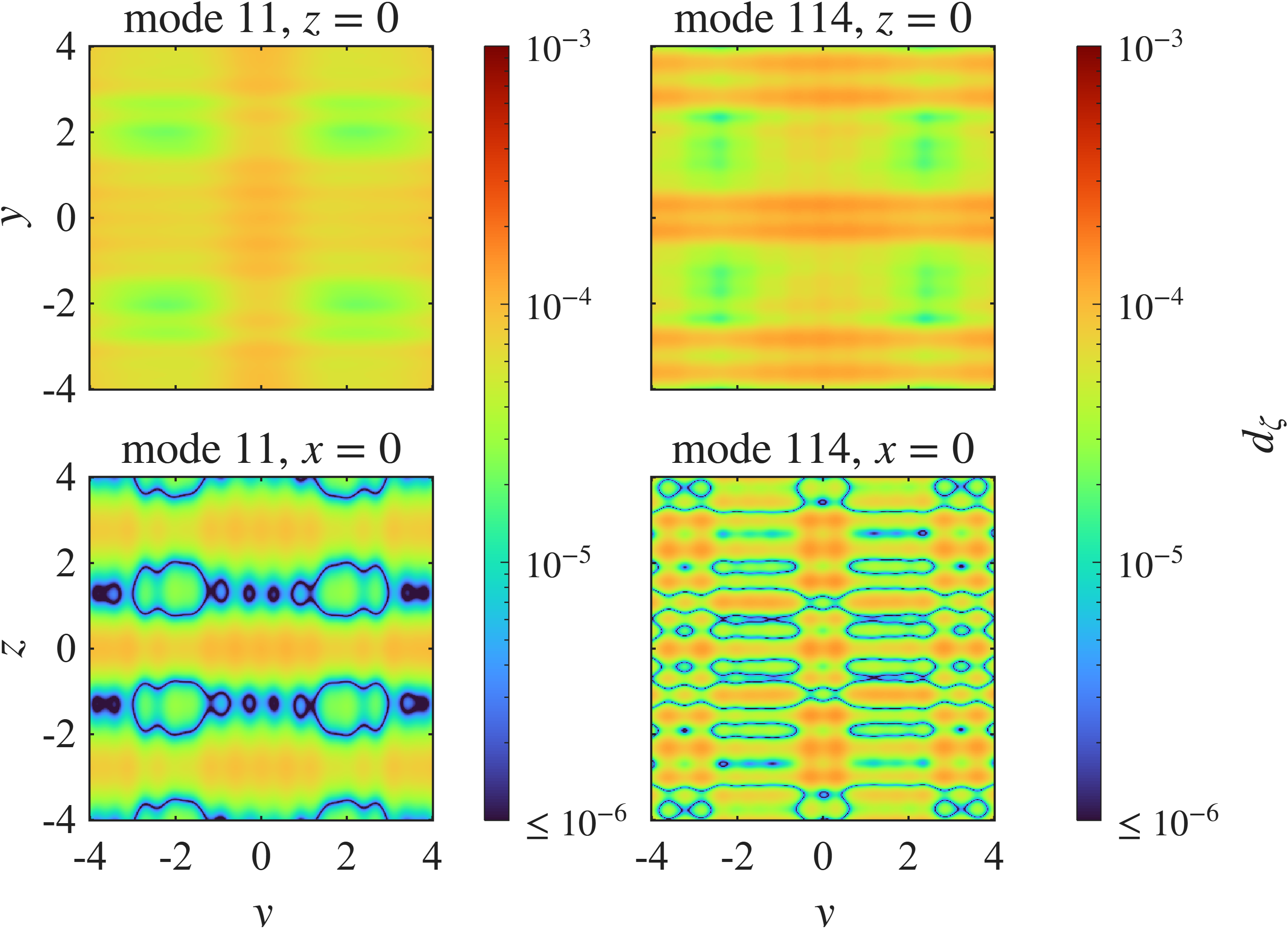}
\caption{Relative local LWRQ deviation for modes $11$ and $114$ on the planes $z=0$ and $x=0$.}
\label{fig:LWRQ_tracked_modes}
\end{subfigure}

\caption{Physical-space consistency of the tracked $N=5$ eigenpairs. Panel~(a) compares the relative Yee residual, LWRQ variation, and 3D-6D spectral discrepancy, while panel~(b) shows the local LWRQ deviations for the two tracked modes on representative physical sections.}
\label{fig:LWRQ_physical_validation}
\end{figure}

Figure~\ref{fig:LWRQ_physical_sensitivity} shows that all three error measures decrease under Yee-grid refinement, indicating improved agreement between the reconstructed 3D fields and the 6D eigenpairs. The two insets further show that the results become insensitive to the physical-window size once the half-width is sufficiently large: for both tracked modes, $\eta_{\mathrm{3D}}$ and $\nu_{\mathrm{LWRQ}}$ remain in the $10^{-5}$ range, while $\delta_{\lambda}$ stays at approximately the $10^{-6}$-$10^{-5}$ level for $L\gtrsim2$. Thus, the observed physical-space consistency is stable with respect to the cropping window.

Together, the small $\eta_{\mathrm{3D}}$ and $\nu_{\mathrm{LWRQ}}$ show that the reconstructed fields behave as approximate eigenfunctions of the independently discretized 3D system, while the decreasing $\delta_{\lambda}$ confirms that the spectral values recovered from these fields agree closely with those computed from the 6D Fourier discretization.

To examine this consistency locally, we further define
$$
d_{\boldsymbol{\zeta}}
=
\frac{
|\rho_{\boldsymbol{\zeta}}-\lambda_{\mathrm{3D}}|
}{
|\lambda_{\mathrm{3D}}|
}.
$$
Figure~\ref{fig:LWRQ_tracked_modes} shows $d_{\boldsymbol{\zeta}}$ for modes $11$ and $114$ on the sections $z=0$ and $x=0$ over $[-4,4]^3$. For each mode, the two sections use a common logarithmic color scale, with values below $10^{-6}$ saturated at the lower limit. The resulting spatial distributions resolve the local variation of the LWRQ deviation and are consistent with the small global $\nu_{\mathrm{LWRQ}}$.

\subsection{Experiment 4: A representative 3D quasiperiodic Maxwell problem}
\label{subsec:exp_real_quasiperiodic}

The final experiment applies the complete computational framework to a representative 3D quasiperiodic Maxwell problem. We compute the low-frequency spectrum along a closed physical Bloch path, reconstruct selected eigenmodes in physical space, and assess their physical-space consistency using cropped Yee residuals and LWRQs.

We use the periods and projection matrix specified in Subsection~\ref{subsec:exp_constant_medium}. On the 6D torus, the lifted permittivity is
\begin{align}
\label{eq:exp4_medium}
\mathcal E(\x)
=
&\,1.5
\Big[
2.8
+
\cos(x_1-x_4)\cos(x_2-x_5)
+
0.8\cos(x_2-x_5)\cos(x_3-x_6)
\notag\\
&\hspace{1.2cm}
+
0.6\cos(x_3-x_6)\cos(x_1-x_4)
+
0.1\sum_{\ell=1}^{6}\cos x_\ell
\Big]I_3 .
\end{align}
Under the physical restriction $\x=P\r$, the added axial harmonics introduce the frequency pairs $(1,\sqrt{2})$, $(1,\sqrt{3})$, and $(1,\sqrt{5})$ in the three coordinate directions. Their irrational ratios exclude any nonzero translation period, while the difference-phase products couple the directional modulations.

The closed physical Bloch path is
$$
\q^{(0)}
=
(0.05,0.04,0.03)^{\mathsf T},
\qquad
\q^{(1)}
=
(0.25,0.04,0.03)^{\mathsf T},
$$
$$
\q^{(2)}
=
(0.25,0.20,0.03)^{\mathsf T},
\qquad
\q^{(3)}
=
(0.25,0.20,0.18)^{\mathsf T},
\qquad
\q^{(4)}
=
\q^{(0)}.
$$
Each segment is divided into $20$ equal subintervals. With $N=5$, we compute the smallest $20$ positive eigenvalues at each Bloch wave vector using the inverse Lanczos method of Section~\ref{sec:inverse_lanczos} and the explicit inverse \eqref{eq:Kr_inverse_orthogonalized}.

To examine the physical-space realization at different path locations and spectral levels, we select
\begin{equation}
\label{eq:exp4_selected_pairs}
S_1=(11,1),
\qquad
S_2=(31,9),
\qquad
S_3=(51,12),
\qquad
S_4=(71,18),
\end{equation}
where the first entry denotes the sampled Bloch-point index and the second denotes the ordered positive eigenmode. For each selected eigenpair, the 3D staggered electric field is reconstructed using the memory-efficient procedure in Subsection~\ref{subsec:memory_efficient_Yee_reconstruction}. The Taylor reconstruction uses $p=10$, with local centers selected to satisfy $r_{\mathrm{ph},\max}\leq5.20$.

For the numerical assessment of physical-space consistency, we use the default Yee parameters from Subsection~\ref{subsec:exp_LWRQ_validation},
$$
h=0.025,
\qquad
n=20,
\qquad
\Omega_n=\{-20,\ldots,20\}^3.
$$
We use the Gaussian neighborhoods and normalized weights specified in Subsection~\ref{subsec:exp_LWRQ_validation}. By Theorem~\ref{thm:weighted_local_RQ_to_global_RQ}, the mass-weighted LWRQ mean equals the cropped Yee quotient, which we compare with the corresponding 6D eigenvalue.

To illustrate the quasiperiodic spatial structure of the reconstructed fields, we additionally plot the normalized intensity $|\widehat{\u}|^2$ on enlarged $z=0$ and $x=y$ slices over $[-2\pi,2\pi]^2$. Each Fourier mode of the 6D eigenvector is mapped to a physical wave vector of the form $\q_{\boldsymbol\xi}=P^{\mathsf T}(\k+\boldsymbol\xi)$, so that the irrational entries of $P$ generate incommensurate spatial frequencies in the reconstructed field. In our previous work on projection-based quasiperiodic Helmholtz eigenproblems~\cite{SunLiLinLyu2026}, we showed that this irrational projection mechanism transforms high-dimensional periodic eigenfunctions into quasiperiodic fields in physical space. The enlarged slices are used only for visualization, while the LWRQ values are computed on the default Yee window above.

\begin{figure}[htbp]
\centering
\captionsetup{skip=3pt}
\captionsetup[subfigure]{skip=2pt}
\begin{subfigure}[t]{0.315\textwidth}
\centering
\includegraphics[width=\textwidth]
{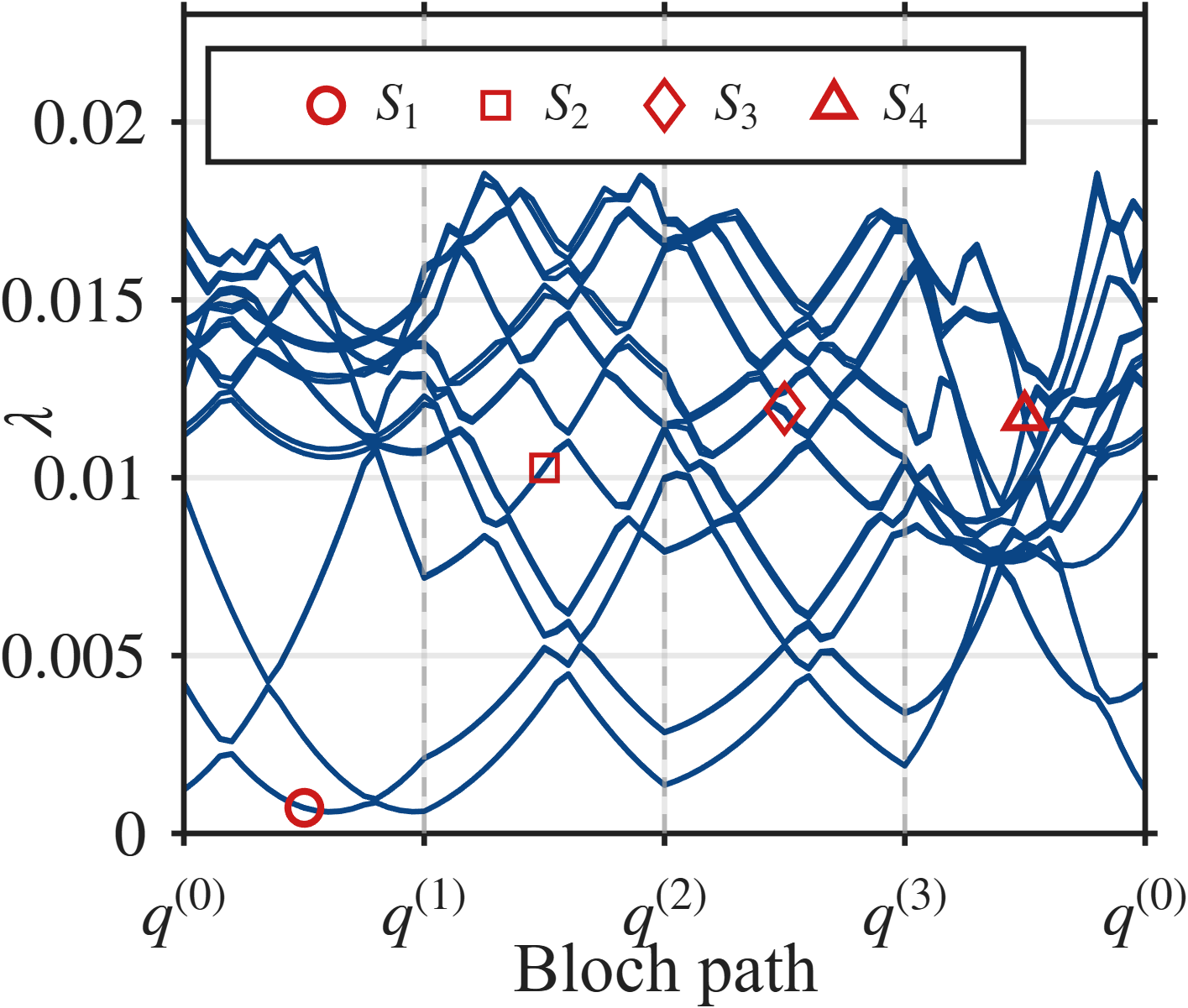}
\caption{Low-frequency spectrum along the closed Bloch path, with LWRQ-recovered values marked at $S_1$-$S_4$.}
\label{fig:exp4_spectrum}
\end{subfigure}
\hfill
\begin{subfigure}[t]{0.315\textwidth}
\centering
\includegraphics[width=\textwidth]
{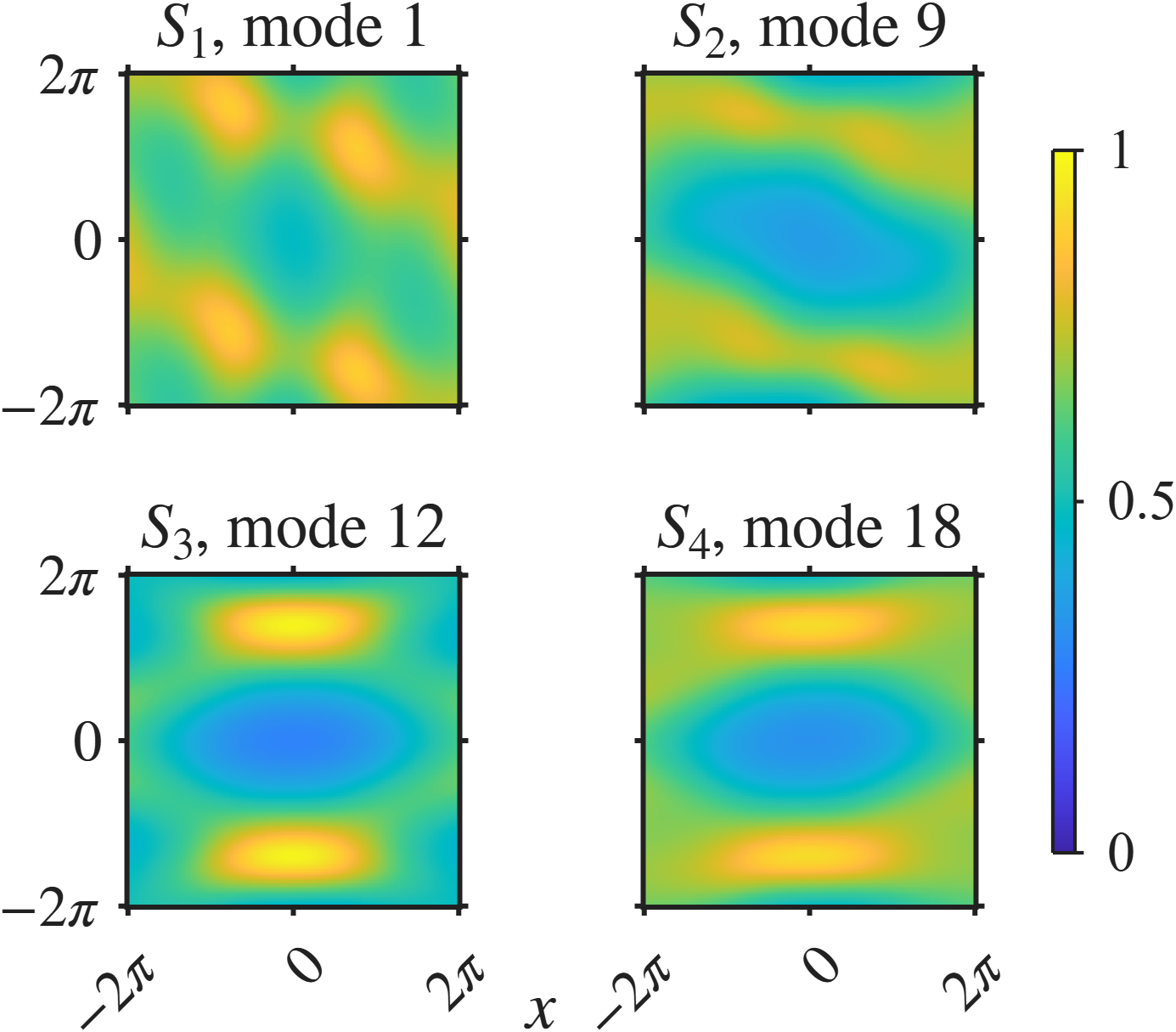}
\caption{Normalized reconstructed electric-field intensities for $S_1$-$S_4$ on $z=0$.}
\label{fig:exp4_field_z0}
\end{subfigure}
\hfill
\begin{subfigure}[t]{0.315\textwidth}
\centering
\includegraphics[width=\textwidth]
{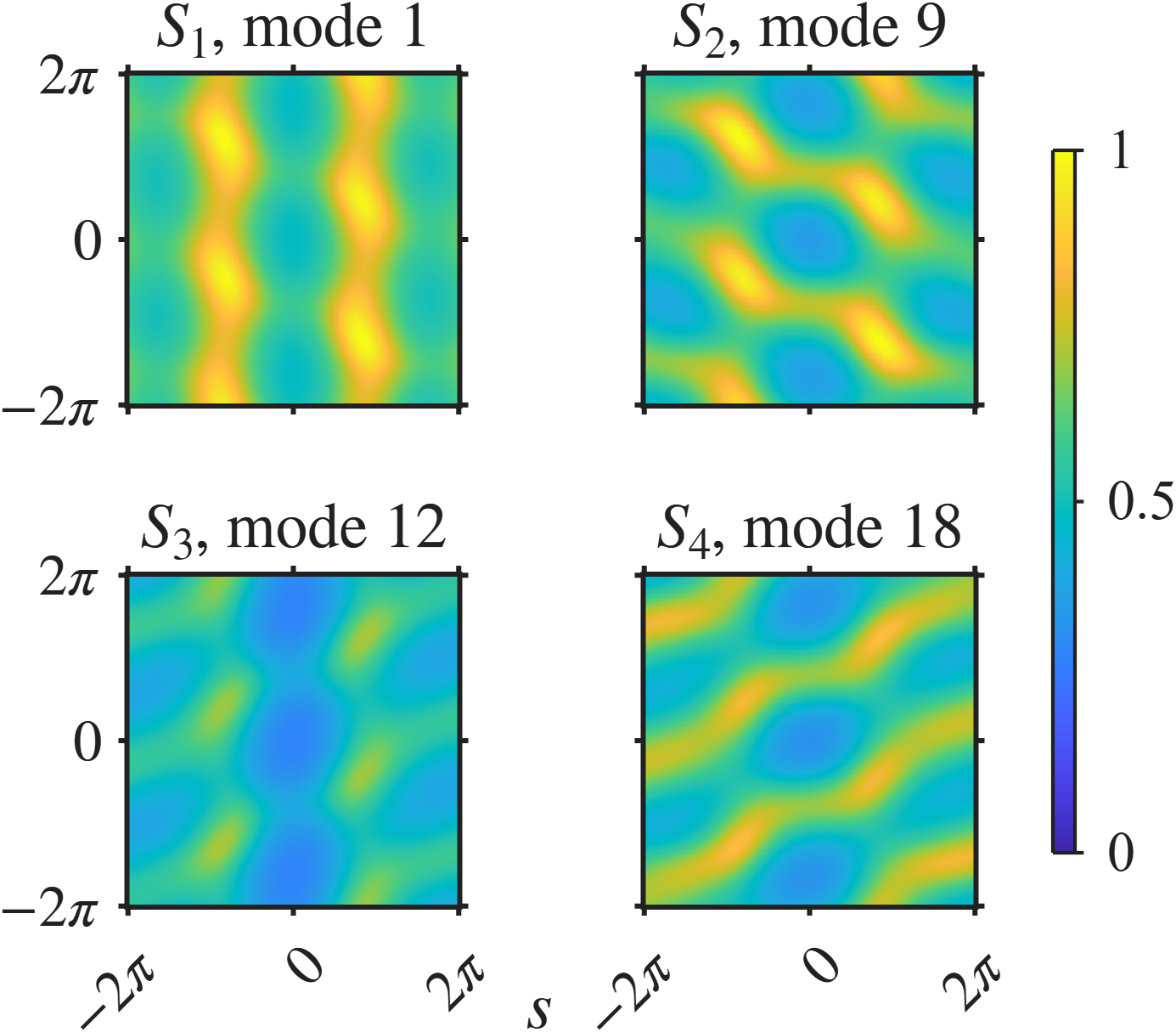}
\caption{Normalized reconstructed electric-field intensities for $S_1$-$S_4$ on $x=y$.}
\label{fig:exp4_field_xy}
\end{subfigure}
\caption{ Spectral and physical-space results for the representative $N=5$ quasiperiodic Maxwell problem. Panel (a) compares the 6D Fourier eigenvalues with the LWRQ-recovered spectral values at the four representative point-mode pairs, whereas panels (b) and (c) show the corresponding normalized reconstructed electric-field intensities on the planes $z=0$ and $x=y$, respectively. }
\label{fig:exp4_quasiperiodic}
\end{figure}

Figure~\ref{fig:exp4_spectrum} shows a densely intertwined computed low-frequency spectrum with apparent crossings, local clustering, and changes in the separation of neighboring branches. Related spectral complexity, including hierarchical and fractal-like structures, has been reported for quasiperiodic photonic systems~\cite{VardenyNahataAgrawal2013,BandresRechtsmanSegev2016,RodriguezEtAl2008}. A related complex spectral organization was also observed in our previous projection-based Helmholtz study~\cite{SunLiLinLyu2026}, where the quasiperiodic spectral branches were recovered from reconstructed physical-space eigenfunctions through weighted pointwise Rayleigh quotients. Here, the red hollow markers represent the mass-weighted LWRQ means obtained from the reconstructed 3D Yee fields, with a maximum relative discrepancy of $1.369\times10^{-3}$ from the 6D eigenvalues over $S_1$-$S_4$.

Figures~\ref{fig:exp4_field_z0} and \ref{fig:exp4_field_xy} display the reconstructed electric-field intensities on the $z=0$ and $x=y$ sections, respectively. The oblique and stripe-like modulations reflect the incommensurate physical frequencies generated by the irrational entries $\sqrt{2}$, $\sqrt{3}$, and $\sqrt{5}$ of the projection matrix through $\q_{\boldsymbol\xi}=P^{\mathsf T}(\k+\boldsymbol\xi)$. The distinct patterns on the two sections further show the directional dependence of the same 3D quasiperiodic eigenmodes.

Together, these results connect the computed low-frequency spectrum, the projected quasiperiodic field structure, and the physical-space consistency assessment through the LWRQ within the same $N=5$ Maxwell computation.

\section{Conclusions}
\label{sec:conclusion}

In this paper, we have developed a computational framework for 3D quasiperiodic Maxwell's eigenvalue problems based on a 6D projection formulation. A modewise longitudinal-transverse decomposition removes the gradient kernel of the projected Bloch-Fourier discretization and reduces the original GEVP to a null-space free standard EVP containing only the positive spectrum. An explicit inverse representation together with an orthogonalized inner formulation enables an efficient inverse Lanczos implementation with a Hermitian positive definite CG system. Beyond the 6D spectral computation, the Fourier eigenvectors are reconstructed on a 3D staggered Yee grid through a separated multi-center Taylor representation, while cropped Yee residuals and LWRQs provide physical-space consistency measures based on a separately discretized Yee operator. The numerical experiments confirm the effectiveness of the proposed reduction, eigensolver, reconstruction, and physical-space recovery framework for 3D quasiperiodic Maxwell problems. Future work will investigate how the projection geometry influences the spectral structure, including comparisons of band structures generated from the same higher-dimensional periodic parent system by different projection matrices, spectral transitions among periodic, quasiperiodic, and more general aperiodic regimes, and nearest-neighbor eigenvalue-spacing statistics for fixed projection matrices.

\bibliographystyle{plain}
\bibliography{research_paper}
\end{document}